    \crefname{ex}{Example}{Examples}
    \crefname{thm}{Theorem}{Theorems} 
    \crefname{lem}{Lemma}{Lemmas}
    \crefname{prop}{Proposition}{Propositions}
    \crefname{cor}{Corollary}{Corollaries} 
    \crefname{conj}{Conjecture}{Conjectures} 
    \crefname{defn}{Definition}{Definitions}
    \crefname{rmk}{Remark}{Remarks} 
	\newtheorem{thm}{Theorem}[section]
	\newtheorem{prop}[thm]{Proposition}
	\newtheorem{cor}[thm]{Corollary}
	\newtheorem{conj}[thm]{Conjecture}
	\newtheorem*{thm*}{Theorem}
	\newtheorem*{cor*}{Corollary}
	\theoremstyle{definition} 
		\newtheorem{defn}[thm]{Definition}
		\newtheorem{ex}[thm]{Example}
    	\newtheorem{rmk}[thm]{Remark}	
    \newcommand{\df}[1]{{\bf\emph{{#1}}}}
        \tikzset{%
        fwdrxn/.style={very thick, arrows={-Stealth[length=5pt,width=5pt]}},
        revrxn/.style={very thick, arrows={-Stealth[length=5pt,width=5pt,left]}},
        newt/.style={turq, opacity=0.15}
        }
        \tikzset{near start abs-right/.style={xshift=1cm}}
        \tikzset{near start abs-left/.style={xshift=-3.5cm}}
        \tikzset{near start abs-up/.style={yshift=1.5cm}}
        \tikzset{near start abs-down/.style={yshift=-1cm}}
	\definecolor{orange}{RGB}{250, 140, 0}
	\definecolor{turq}{RGB}{0, 160, 160}
	\definecolor{violet}{RGB}{164, 98, 234}
    \definecolor{viridisyellow}{RGB}{253,231,36}
    \definecolor{viridisyellowpale}{RGB}{239,223,81}
    \definecolor{viridisgreen}{RGB}{121,209,81}
        \definecolor{hlgreen}{RGB}{16,115,16}
    \definecolor{viridisturq}{RGB}{34,167,132}
    \definecolor{viridisblue}{RGB}{64,67,135}
    \definecolor{viridisviolet}{RGB}{68,1,84}
	\definecolor{ratecnst}{RGB}{172,172,172}
\newcommand{\eq}[1]{\begin{align*}#1\end{align*}}
	\newcommand{\eqn}[1]{\begin{align}#1\end{align}}  
\newcommand{\st}{\colon}                
\newcommand\mc[1]{\mathcal{#1}}
\newcommand{\rr}{\ensuremath{\mathbb{R}}}   
\newcommand{\zz}{\ensuremath{\mathbb{Z}}}
\renewcommand{\epsilon}{\varepsilon}	
    \def\eps{\epsilon}                   
\renewcommand{\phi}{\varphi}			
\DeclareMathOperator{\diag}{diag}		
\newcommand{\grad}{D}
\DeclareMathOperator{\Span}{span}		
\newcommand{\kk}{\kappa}
\newcommand{\vv}[1]{{\boldsymbol{#1}}}  
\newcommand{\rrp}{\rr_{\geq}}
\newcommand{\rrpp}{\rr_{>}}
\newcommand{\zzp}{\zz_{\geq}}
\newcommand{\xx}{\vv x}
\newcommand{\yy}{\vv y}
\newcommand{\ratecnst}[1]{{\footnotesize{\color{gray}{#1}}}}
\title{
   Global stability of perturbed complex-balanced systems
}
\author[1]{
        Polly Y. Yu%
}
\affil[1]{\small NSF--Simons Center for Mathematical and Statistical Analysis of Biology, Harvard University }
\date{} 
\begin{document}
\maketitle
\renewcommand*{\thefootnote}{\arabic{footnote}}

\begin{abstract}
    A class of polynomial dynamical systems called complex-balanced are locally stable and conjectured to be globally stable. In general, complex-balancing is not a robust property, i.e., small changes in parameter values may result in the loss of the complex-balanced property. We show that robustly permanent complex-balanced systems are globally stable even after the rate constants have been perturbed. 
\end{abstract}

\section{Introduction} 
\label{sec:intro}

Robustness is vital to biological systems. From cells to ecosystems, the dynamical behaviour is prescribed by a set of core interactions, which are influenced by intrinsic and environmental noise. 
It is often observed that the system maintains its stable dynamical behaviour despite these external influences. In other words, the system is robust with respect to noise. 
One way to measure robustness is to ask whether certain qualitative dynamics remains after the parameters have been perturbed.

Common models for chemical and biological systems assume mass-action kinetics. The dynamics comes from a system of ODEs built using a list of reactions or interactions, each with a rate constant. In general, mass-action systems can display a diverse set of dynamics, from multistability, oscillations, and even chaos. A particularly stable family of mass-action systems is that of \emph{complex-balancing}~\cite{HornJackson1972}, introduced as a generalization of systems at thermodynamic equilibrium. As such, these systems have exactly one positive steady states (up to conservation laws), which is asymptotically stable~\cite{HornJackson1972} and conjectured to be globally attracting~\cite{Horn1974_GAC}. 

Not only are complex-balanced systems dynamically stable and algebraically rich, they are completely characterized by their underlying networks. A mass-action system is complex-balanced if and only if the underlying network is weakly reversible and the rate constants satisfy some algebraic equations~\cite{horn1972necessary, ToricDynSys2009}. The number of these equations is dictated by the network's topology and geometry.

In the context of biochemical reactions, rate constants are not truly constant; they depend on temperature, pressure, the presence of solvents and ions, etc., and are subjected to thermal fluctuation. Thus the algebraic condition for complex-balancing is not satisfied in general, i.e., complex-balancing is generally not robust. With small perturbations in the rate constants, we can no longer claim all of  complex-balancing's stable dynamical properties. 

In this work, we show that robustly permanent complex-balanced systems are globally stable even with small changes in the rate constants. 
\begin{thm*}
    Let $(G,\vv\kk^*)$ be a complex-balanced system that is robustly permanent with respect to $\vv\kk^*$. Then on every compatibility class $U$, there exists $\eps > 0$ such that  for every $\vv \kk \in B(\vv\kk^*,\eps)$, the mass-action system $(G,\vv\kk)$ has a unique globally attracting point within $U$.
\end{thm*}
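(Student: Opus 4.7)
The plan is to exploit three ingredients: (i) the Horn--Jackson Lyapunov function $L$ of the unperturbed complex-balanced system, (ii) the uniform trapping region supplied by robust permanence, and (iii) hyperbolicity, within the stoichiometric subspace, of the positive steady state $\vv x^* \in U$ of $(G,\vv\kk^*)$. Heuristically, (ii) traps every trajectory in a compact $K\subset U$, (i) funnels it further into an arbitrarily small neighborhood of $\vv x^*$, and (iii) supplies a single attracting steady state of the perturbed system inside that neighborhood to which trajectories then converge.

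Write $\vv f_{\vv\kk}$ for the vector field of $(G,\vv\kk)$ and
\[
L(\vv x) = \sum_i \bigl(x_i\ln(x_i/x_i^*) - x_i + x_i^*\bigr),
\]
so that $\nabla L \cdot \vv f_{\vv\kk^*} \le 0$ on $U$ with equality iff $\vv x = \vv x^*$. Robust permanence with respect to $\vv\kk^*$ should furnish $\eps_0 > 0$ and a single compact $K\subset U$, bounded away from $\partial U$, that eventually traps every trajectory of $(G,\vv\kk)$ for each $\vv\kk \in B(\vv\kk^*,\eps_0)$. Fix $\delta > 0$. On the shell $K \setminus B(\vv x^*,\delta)$, compactness gives $\nabla L\cdot \vv f_{\vv\kk^*} \le -c(\delta) < 0$, while polynomial dependence of $\vv f_{\vv\kk}$ on $\vv\kk$ gives $\|\vv f_{\vv\kk}-\vv f_{\vv\kk^*}\| \le M(K)\,\|\vv\kk - \vv\kk^*\|$ on $K$. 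Combining, for $\|\vv\kk-\vv\kk^*\|$ sufficiently small $\nabla L \cdot \vv f_{\vv\kk}$ stays below $-c(\delta)/2$ on the shell, so every trajectory of the perturbed system that enters $K$ must eventually be driven into $B(\vv x^*,\delta)\cap U$.

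For the local picture, the Jacobian of $\vv f_{\vv\kk^*}$ at $\vv x^*$ is Hurwitz on the stoichiometric subspace $S$ (this is the linearization underlying Horn and Jackson's local stability theorem, and corresponds to negative definiteness with respect to $\mrm{Hess}\,L(\vv x^*)$). The implicit function theorem applied to $\vv f_{\vv\kk}|_U = 0$ at $(\vv x^*,\vv\kk^*)$ yields a continuous map $\vv\kk\mapsto \vv x^\sharp(\vv\kk)$, with $\vv x^\sharp(\vv\kk^*) = \vv x^*$, which is the unique positive steady state of $(G,\vv\kk)$ in a neighborhood $V$ of $\vv x^*$ in $U$. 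Continuity of the spectrum keeps $\vv x^\sharp(\vv\kk)$ Hurwitz for $\vv\kk$ close to $\vv\kk^*$, so its basin of attraction contains some fixed ball $B(\vv x^*,\delta_0)\cap U\subset V$. Choosing $\delta = \delta_0$ in the Lyapunov step and taking $\eps$ to be the minimum of all thresholds completes the chain: trajectories enter $K$, then $B(\vv x^*,\delta_0)$, then converge to $\vv x^\sharp(\vv\kk)$. Uniqueness inside $U$ is immediate: any other steady state must lie in $K$ by permanence but outside $V$ by the local uniqueness, contradicting the strict Lyapunov decrease on the shell.

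I expect the main obstacle to be the uniformity hidden in the Lyapunov step. The trapping set $K$ supplied by robust permanence must serve every $\vv\kk$ in a single neighborhood of $\vv\kk^*$ simultaneously, not as a $\vv\kk$-dependent family; otherwise the estimate $c(\delta)$ and the perturbation bound $M(K)\|\vv\kk - \vv\kk^*\|$ cannot be combined into one small-perturbation criterion. Extracting this uniform trap from the precise definition of robust permanence used here, and ensuring that the trapping constants do not degenerate as $\vv\kk$ varies, is where the real technical work lies; once this is in hand, the Lyapunov funneling and the implicit-function/hyperbolicity arguments are essentially standard perturbation-theoretic tools.
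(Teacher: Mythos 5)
Your argument is essentially correct, but it follows a genuinely different route from the paper. The paper first invokes the linearized-stability/reduction result (Theorem~\ref{thm:JohnstonLinearizedStability}) to pass, via a diffeomorphism, to an $s$-dimensional system on the compatibility class whose complex-balanced steady state is linearly stable, and then cites the Smith--Waltman perturbation theorem (Theorem~\ref{thm:SmithWaltman}), which packages the entire implication ``hyperbolic $+$ globally attracting at $\vv\kk^*$ $+$ a compact set absorbing uniformly over $\vv\kk$ near $\vv\kk^*$ $\Rightarrow$ unique nearby globally attracting steady state for each nearby $\vv\kk$.'' You instead re-derive this implication directly in the complex-balanced setting: the Horn--Jackson Lyapunov function gives the uniform funneling of perturbed trajectories from the absorbing set $K$ into a small ball around $\vv x^*$, and the implicit function theorem applied to $\vv f_{\vv\kk}$ viewed as a map $U \to S$ (legitimate because $D\vv f_{\vv\kk^*}(\vv x^*)|_S$ is Hurwitz) gives the persistent steady state $\vv x^\sharp(\vv\kk)$ together with a $\vv\kk$-uniform local basin, and uniqueness in $U$ follows from the strict Lyapunov decrease on the shell. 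What the paper's route buys is brevity and a clean black-box citation; what yours buys is a self-contained proof that never needs the explicit diffeomorphic reduction or condition (iii) of Smith--Waltman (global attraction at $\vv\kk^*$ comes out of your funneling argument rather than being assumed). Two remarks: the hyperbolicity on $S$ that you use is \emph{not} part of Horn and Jackson's theorem (their argument is purely Lyapunov-based); it is precisely the later linear-stability result of Johnston and Boros--M\"uller--Regensburger that the paper quotes as Theorem~\ref{thm:JohnstonLinearizedStability}, so you still rely on the same nontrivial input. Also, the ``main obstacle'' you flag --- extracting a single trapping set $K$ valid for all $\vv\kk$ in one ball --- is not an obstacle at all: Definition~\ref{def:robustperm} builds exactly this uniformity into robust permanence, so that step is immediate; the only remaining details in your sketch (funneling into a forward-invariant sublevel set of the local quadratic Lyapunov function rather than a raw ball) are routine.
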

It is worth noting that robust permanence for complex-balanced systems follows immediately from the Permanence Conjecture for variable-$\vv\kk$ weakly reversible systems, which has been proved for several special classes; see for example \cite{CraciunNazarovPantea2013GAC, Anderson2011GAC, BorosHofbauer2019, GopalkrishnanMillerShiu2014}. 

This paper is organized as follows. Mathematical notations used throughout this work are covered in \Cref{sec:notation}. We cover the necessary background on mass-action systems, complex-balanced systems, and permanence in \Cref{sec:MAS}. Then we prove our main result and give some examples in \Cref{sec:main}.

\section{Notations}
\label{sec:notation}

Throughout this work, we let $\rrp$ and $\rrpp$ denote the set of non-negative and positive real numbers respectively. Accordingly, $\rrp^n$ and $\rrpp^n$ denote the set of vectors in $\rr^n$ with non-negative and positive entries respectively. We say $\xx > \vv 0$ if $\xx \in \rrpp^n$, and $\xx \geq \vv 0$ is defined analogously. Let $B(\xx,\eps)$ be the $\eps$-ball around $\xx$. 
For any $\xx \in \rrpp^n$ and $\yy \in \rrp^n$, let $\xx^\yy = x_1^{y_1} x_2^{y_2} \cdots x_n^{y_n}$. 

\section{Mass-action systems} 
\label{sec:MAS}

We now provide a brief introduction to reaction networks and mass-action systems. For a more in-depth introduction, see \cite{Gunawardena_review, YuCraciun2018_review, Feinberg_book}.

A \df{reaction network} (or \df{network} for short) is a directed graph $G = (V,E)$ with no self-loops and no isolated vertices, where $V \subset \rrp^n$ and $E \subseteq V \times V$. An edge $(\yy_i,\yy_j) \in E$ is denoted $(i,j)$ or $\yy_i \to \yy_j$. 
A vertex is also called a \df{complex}, while an edge is called a \df{reaction}. Throughout this work, $\rr^n$ will be the ambient space; $m = |V|$, $r = |E|$, and $\ell$ is the number of connected components of $G$. 
A network is said to be \df{weakly reversible} if every connected component is strongly connected. 

In some classical literature on mass-action systems~\cite{Feinberg_book}, a reaction network is defined to be a triple $(\mc S, \mc C, \mc R)$, where $\mc S$ is the set of \emph{species}, $\mc C$ is the set of \emph{complexes}, and $\mc R$ is the set of \emph{reactions}. The two definitions are equivalent via a natural identification between the species and the standard orthonormal basis of $\rr^n$. For example when $n=3$, the complex $\sf{X}$ is identified with $(1,0,0)$ and  $\sf{2Y}+\sf{Z}$ with $(0,2,1)$, and so forth.

Assuming \emph{mass-action kinetics}, the time-evolution of the concentration vector $\xx(t)$ is given by the system of autonomous ODEs
\eqn{ \label{eq:mas}
    \dot{\xx}(t) &=  \sum_{(i,j) \in E} \kk_{ij} \xx(t)^{\yy_i} (\yy_j - \yy_i), 
}
where $\kk_{ij} > 0$ is the \df{rate constant} of the reaction $\yy_i \to \yy_j$. We say \eqref{eq:mas} is the associated system of the \df{mass-action system} $(G,\vv\kk)$, where $\vv \kk = (\kk_{ij})_{(i,j) \in E}$. 

\begin{rmk} 
In this work, we restrict the set of vertices to $V \subset (\{0\} \cup [1,\infty))^n$ or $V \subset \zzp^n$. Under this assumption, the state space $\rrpp^n$ is forward-invariant; moreover, the right-hand side of \eqref{eq:mas} is Lipschitz continuous, so solution to \eqref{eq:mas} with any initial condition in $\rrpp^n$ is unique~\cite{Sontag_TCell}.
\end{rmk} 

The \df{stoichiometric subspace} of a reaction network $G$ is the linear subspace
\eq{ 
    S = \Span\{ \yy_j - \yy_i \st  (i,j) \in E\}, 
}
which contains $\dot{\xx}(t)$. Hence for any initial condition $\xx(0) \in \rrpp^n$, the solution $\xx(t)$ is confined to the \df{compatibility class} $(\xx(0) + S)_> = (\xx(0) + S)\cap \rrpp^n$.  

A generalization of \eqref{eq:mas} is the \df{variable-$\vv\kk$ mass-action system}
\eqn{\label{eq:vk-mas} 
    \dot{\xx}(t) &=  \sum_{(i,j) \in E} \kk_{ij}(t) \xx(t)^{\yy_i} (\yy_j - \yy_i), 
}
where $\eps \leq \kk_{ij}(t) \leq \eps^{-1}$ for some uniformly chosen $0 < \eps < 1$~\cite{CraciunNazarovPantea2013GAC}. One might further assume the coefficient functions $\kk_{ij}(t)$ to be sufficiently smooth, for example Lipschitz. Regardless, the dynamics of \eqref{eq:vk-mas} is also constrained on compatibility classes. Clearly, the dynamics of \eqref{eq:mas} is replicated by \eqref{eq:vk-mas}  when each $\kk_{ij}(t)$ is constant.

\begin{figure}[h!!]
\centering
\begin{subfigure}[b]{0.25\textwidth}
\centering 
    \begin{tikzpicture}[scale=0.9]
    \node (1) at (0,0) [left] {$\sf{3X}$};
    \node (2) at (3,0) [right] {$\sf{3Y}$};
    \node (3) at (1.5,3) {$\sf{3Z}$};
    \node (4) at (1.5,1.)  {$\sf{X+Y+Z}$};
    \draw [fwdrxn] (1)--(4)node [midway, below right=-3pt] {\ratecnst{$\kk_{14}$}};
    \draw [fwdrxn] (4)--(3)node [midway, below right] {\ratecnst{\!$\kk_{43}$}};
    \draw [fwdrxn] (3)--(1)node [midway, above left] {\ratecnst{$\kk_{31}$\!\!}};
    \draw [fwdrxn] (3)--(2)node [midway, above right] {\ratecnst{\!\!$\kk_{32}$}};
    \draw [fwdrxn] (2)--(1)node [midway, below] {\ratecnst{$\kk_{21}$}};
\end{tikzpicture}
\vspace{-12pt}
    \caption{} 
    \label{fig:introEx-network-plain} 
    
    \begin{tikzpicture} 
    \begin{axis}[
      view={115}{15},
      axis lines=center,
      width=2in,height=2in,
      ticks = none, 
      xmin=0,xmax=3.75,ymin=0,ymax=3.5,zmin=0,zmax=3.5,
    ]
    \def\xmax{3.75}
    \def\ymax{3.5}
    \def\zmax{3.5}
    \def\gridcolor{gray!20}
    \foreach \x in {1,...,3}
        \foreach \y in {1,...,3}
            \foreach \z in {1,...,3}
            {
                \addplot3 [no marks, dashed, \gridcolor] coordinates {(\x,0,0)  (\x,0,\zmax)}; 
                \addplot3 [no marks, dashed, \gridcolor] coordinates {(0,0,\z)  (\xmax,0,\z)}; 
                \addplot3 [no marks, dashed, \gridcolor] coordinates {(0,\y,0)  (0,\y,\zmax)}; 
                \addplot3 [no marks, dashed, \gridcolor] coordinates {(0,0,\z)  (0,\ymax,\z)}; 
                \addplot3 [no marks, dashed, \gridcolor] coordinates {(\x,0,0)  (\x,\ymax,0)}; 
                \addplot3 [no marks, dashed, \gridcolor] coordinates {(0,\y,0)  (\xmax,\y,0)}; 
            }
            
    \foreach \v in {(3,0,0), (0,3,0), (0,0,3), (1,1,1)}
        {
            \addplot3 [only marks, mark size=2.2pt, blue] coordinates {\v};
        }
    \node [outer sep=1pt] (1) at (axis cs:3,0,0) {};
    \node [outer sep=1pt] (2) at (axis cs:0,3,0) {};
    \node [outer sep=1pt] (3) at (axis cs:0,0,3) {};
    \node [outer sep=1pt] (4) at (axis cs:1,1,1) {};
    
    \draw [fwdrxn, blue, transform canvas={yshift=0pt}] (1)--(4) node [midway, below right=-4pt] {\ratecnst{$\kk_{14}$\!\!}}; 
    \draw [fwdrxn, blue, transform canvas={yshift=0pt}] (4)--(3) node [midway, below right] {\ratecnst{$\kk_{43}$}};
    \draw [fwdrxn, blue, transform canvas={yshift=0pt}] (3)--(2) node [midway, above right] {\ratecnst{\!\!$\kk_{32}$}};
    \draw [fwdrxn, blue, transform canvas={yshift=0pt}] (3)--(1) node [midway, above left] {\ratecnst{$\kk_{31}$\!\!}};
    \draw [fwdrxn, blue, transform canvas={yshift=0pt}] (2)--(1) node [midway, below] {\ratecnst{$\kk_{21}$\!\!}};
    \end{axis}
\end{tikzpicture}
    \caption{} 
    \label{fig:introEx-network-EEG} 
\end{subfigure} 
\hspace{0.5cm}
\begin{subfigure}[b]{0.65\textwidth}
\centering 
    \includegraphics[width=4in]{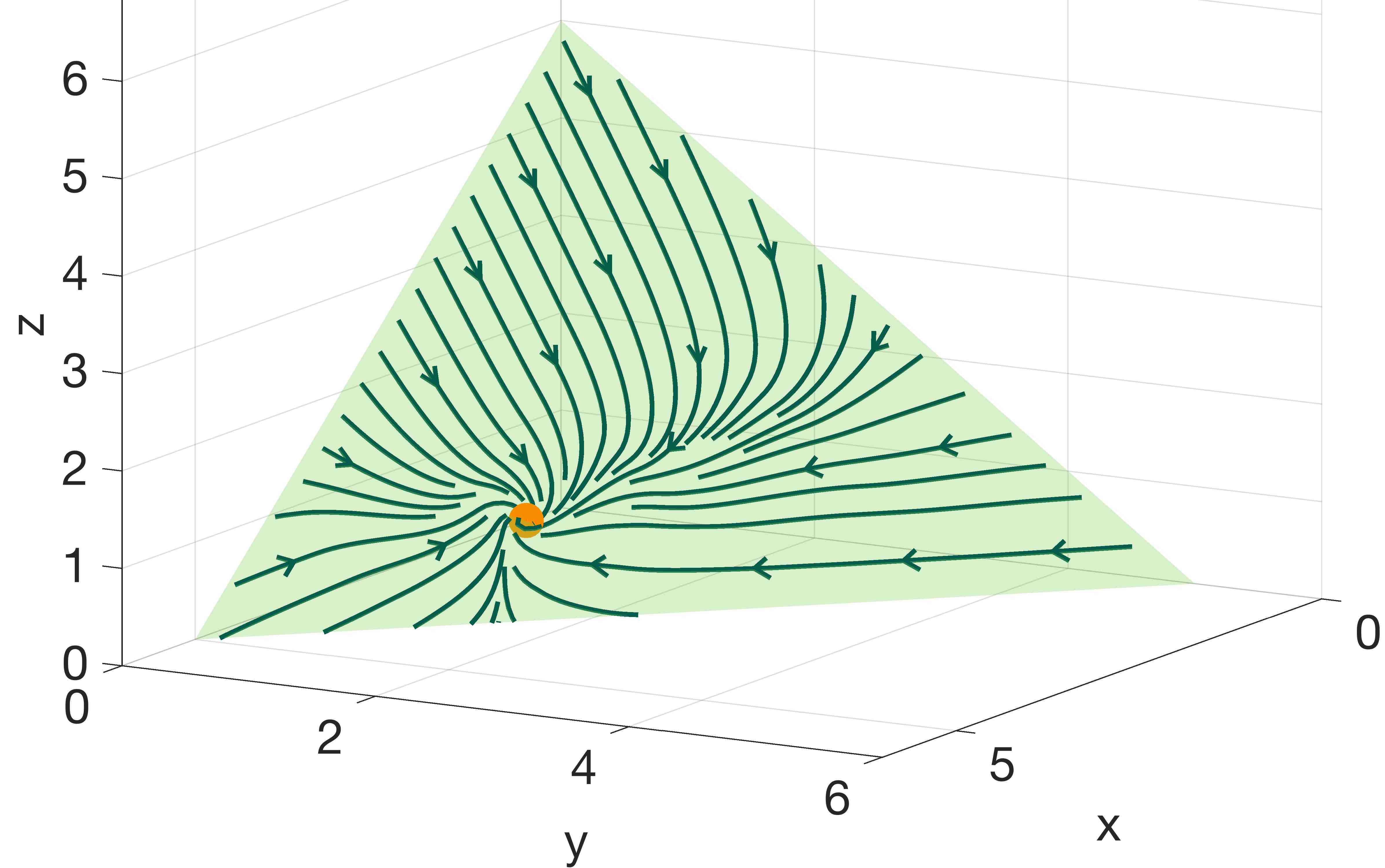}
    \caption{}
    \label{fig:introEx-traj}
\end{subfigure}
\caption{(a) A list of reactions that make up (b) the reaction network $G$. (c) The dynamics of the mass-action system $(G,\vv\kk)$ confined to a compatibility class. }
\label{fig:introEx}
\end{figure}

\begin{ex}
\label{ex:intro}
    Consider the five reactions listed in  \Cref{fig:introEx-network-plain}. They generate the weakly reversible reaction network $G$ in \Cref{fig:introEx-network-EEG} embedded in $\rr^3$. The associated system for any choice of $\kk_{ij} > 0$ is given by  
    \eq{ 
        \dot{x} &= -2 \kk_{14} x^3 - \hphantom{3}\kk_{43} xyz + 3 \kk_{31} z^3 + 3 \kk_{21} y^3 \\ 
        \dot{y} &= \hphantom{-2}\kk_{14} x^3 - \hphantom{3}\kk_{43} xyz + 3 \kk_{32} z^3 - 3 \kk_{21} y^3 \\ 
        \dot{z} &= \hphantom{-2}\kk_{14} x^3 +2 \kk_{43} xyz - 3 \kk_{31} z^3  - 3 \kk_{32} z^3.
    }
    The dynamics is constrained to 2-dimensional compatibility classes, an example of which is shown in \Cref{fig:introEx-traj}. Note that the stoichiometric subspace and the compatibility classes are parallel to the affine span of the network $G$.
\end{ex}

\subsection{Complex-balanced systems} 
\label{sec:CB} 

The class of complex-balanced systems was defined to generalize mass-action systems at thermodynamic equilibrium, the so-called detailed-balanced systems. As such, they enjoy most of the algebraic and dynamical properties of detailed-balanced systems.

\begin{defn}
\label{def:CB} 
A mass-action system $(G,\vv\kk)$ is said to be  \df{complex-balanced} if there exists a positive steady state $\xx \in \rrpp^n$ such that for every vertex $\yy_i \in V$, the following equality holds:
\eqn{ \label{eq:CB}
    \sum_{\yy_j \in V} \kk_{ij} \xx^{\yy_i} = \sum_{\yy_j \in V} \kk_{ji} \xx^{\yy_j}.
}
\end{defn}

A complex-balanced system is defined to be a $(G,\vv\kk)$ admitting a steady state satisfying \eqref{eq:CB}; nonetheless if a mass-action system has one complex-balanced steady state, then \emph{all} of its positive steady states are complex-balanced~\cite{HornJackson1972}. This justifies calling $(G,\vv\kk)$ complex-balanced.

Dynamically, complex-balanced systems are remarkably stable. If $\xx^*$ is a complex-balanced steady state, the function
\eq{ 
    V(\xx) = \sum_{i=1}^n x_i \ln (x_i - x_i^* -1) 
}
serves as a Lyapunov function on all of $\rrpp^n$. The unique minimum of $V$ within each compatibility class is a complex-balanced steady state, which is conjectured to be globally stable within its compatibility class~\cite{Horn1974_GAC, CraciunNazarovPantea2013GAC}. The latter statement, known as the \emph{Global Attractor Conjecture} in reaction network theory, is proved only for several cases; for example, strongly connected networks~\cite{Anderson2011GAC, BorosHofbauer2019}, strongly endotactic networks~\cite{GopalkrishnanMillerShiu2014}, networks in $\rr^2$~\cite{CraciunNazarovPantea2013GAC}, and networks with three-dimensional stoichiometric subspaces~\cite{Pantea2012}.

A complex-balanced steady state is not only locally asymptotically stable within its compatibility class; it is linearly stable~\cite{Johnston2008, Johnston2011_thesis, BorosMullerRegensburger2019}. The stable manifold at the steady state $\xx^*$ coincides with the compatibility class $(\xx^*+S)_>$, while the unstable manifold is trivial, and the centre subspace is parallel to $\diag(\xx^*)S^\perp$~\cite{Johnston2008, Johnston2011_thesis}, which is linearly independent of $S$. Consequently, the dynamics of a complex-balanced system on a compatibility class is  diffeomorphic to a lower-dimensional system.

\begin{thm}[\cite{Johnston2008, Johnston2011_thesis}]
\label{thm:JohnstonLinearizedStability}
    Consider $(G,\vv\kk)$ and its associated system $\dot{\xx} = \vv f(\xx; \vv\kk)$, where $s = \dim S$. Let $\xx^* > \vv 0$ be a complex-balanced steady state in the compatibility class $U = (\xx_0 + S)_>$. Then $U$ is forward-invariant, and coincides with the stable manifold of the system. The system $\left.\dot{\xx} \right|_{U}$ is diffeomorphic to a $s$-dimensional system 
        \eq{
            \dot{\vv z} = \vv F(\vv z; \vv\kk, \vv x_0),
        } 
        which has a unique linearly stable steady state. 
\end{thm}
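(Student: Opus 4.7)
The plan is to linearize at $\xx^*$ and analyze the Jacobian $J := D\vv f(\xx^*;\vv\kk)$ using the standard factorization $\vv f(\xx) = Y A_{\vv\kk} \Psi(\xx)$, where $Y \in \rr^{n\times m}$ has columns $\yy_1,\ldots,\yy_m$, the vector $\Psi(\xx) \in \rr^m$ has entries $\xx^{\yy_i}$, and $A_{\vv\kk}$ is the weighted graph Laplacian of $G$ defined by $(A_{\vv\kk})_{ji} = \kk_{ij}$ for $(i,j)\in E$ and $(A_{\vv\kk})_{ii} = -\sum_{j\colon (i,j)\in E} \kk_{ij}$. The chain rule yields $J = Y A_{\vv\kk} \diag(\Psi(\xx^*)) Y^T \diag(1/\xx^*)$. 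Forward invariance of $U = (\xx_0 + S)_>$ is immediate from $\dot{\xx} \in S$ combined with the forward invariance of $\rrpp^n$ noted in \Cref{sec:MAS}.

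Next I would identify the image and a canonical piece of the kernel of $J$. The inclusion $\Img J \subseteq S$ is automatic from the factorization. For the kernel, take $\vv v \in \diag(\xx^*) S^\perp$; writing $\vv u := \diag(1/\xx^*)\vv v \in S^\perp$, the entries $\braket{\yy_i}{\vv u}$ of $Y^T \vv u$ are constant on each connected component of $G$, so scaling componentwise by $\Psi(\xx^*)$ yields a vector which on each component is proportional to $\Psi(\xx^*)$ on that component. Since complex-balancing forces $A_{\vv\kk}\Psi(\xx^*) = 0$ to hold component-by-component on weakly reversible graphs, this vector is annihilated by $A_{\vv\kk}$, so $J\vv v = 0$ and $\diag(\xx^*)S^\perp \subseteq \Ker J$.

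The heart of the proof is showing that $J|_S$ is Hurwitz. I would use the Horn--Jackson Lyapunov function $V(\xx) = \sum_i (x_i\ln(x_i/x_i^*) - x_i + x_i^*)$, whose Hessian at $\xx^*$ is $H := \diag(1/\xx^*) \succ 0$ and which satisfies $\braket{\nabla V(\xx)}{\vv f(\xx)} \leq 0$ with equality only at complex-balanced steady states. A second-order expansion gives $\vv v^T H J \vv v \leq 0$ for all $\vv v \in \rr^n$, and substituting $\vv w := Y^T \diag(1/\xx^*)\vv v$ rewrites this as $\vv w^T A_{\vv\kk}\diag(\Psi(\xx^*)) \vv w \leq 0$. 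The matrix $A_{\vv\kk}\diag(\Psi(\xx^*))$ is the weighted Laplacian of $G$ with edge weights $\kk_{ij}(\xx^*)^{\yy_i}$, which is balanced (both row and column sums vanish) by complex-balancing; standard graph-theoretic arguments then identify the kernel of its symmetric part as the span of indicators of strongly connected components, so equality forces $\vv v \in \diag(\xx^*)S^\perp$. Finally, $S \cap \diag(\xx^*)S^\perp = \{\vv 0\}$ because $\vv v = \diag(\xx^*)\vv u$ with $\vv v \in S$ and $\vv u \in S^\perp$ gives $\vv v^T H \vv v = \vv v^T \vv u = 0$, forcing $\vv v = \vv 0$. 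Thus $J|_S$ is negative definite in the $H$-inner product, $\Ker J = \diag(\xx^*)S^\perp$ with $\rank J = s$, and $\rr^n = S \oplus \diag(\xx^*)S^\perp$ is the stable/centre decomposition.

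For the diffeomorphism statement, I would fix any linear isomorphism $\phi\colon \rr^s \to \xx_0 + S$ with $\phi(\vv 0) = \xx^*$ and pull back the restricted flow to the $s$-dimensional system $\dot{\vv z} = \vv F(\vv z;\vv\kk,\xx_0) := \phi^{-1}(\vv f(\phi(\vv z);\vv\kk))$; its Jacobian at $\vv 0$ is similar to $J|_S$, hence Hurwitz, making $\vv 0$ the unique linearly stable steady state. The main obstacle is the negative semi-definiteness of $A_{\vv\kk}\diag(\Psi(\xx^*))$ with the exact identification of its kernel as constants-on-components: this is the graph-theoretic fact about balanced weighted Laplacians of strongly connected components that is essentially equivalent to the combinatorial identities (a Matrix--Tree--theorem computation) underlying the Horn--Jackson Lyapunov argument itself.
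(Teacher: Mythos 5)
The paper never proves this theorem --- it is imported wholesale from \cite{Johnston2008, Johnston2011_thesis} --- so there is no in-paper argument to compare against; your linearization route (factor $\vv f = YA_{\vv\kk}\Psi$, compute $J = YA_{\vv\kk}\diag(\Psi(\xx^*))Y^T\diag(1/\xx^*)$, show $\vv v^T H J\vv v \le 0$ for $H=\diag(1/\xx^*)$ with equality exactly on $\diag(\xx^*)S^\perp$, combine $S\cap\diag(\xx^*)S^\perp=\{\vv 0\}$ with $\Img J\subseteq S$ to get $J|_S$ Hurwitz and $\Ker J=\diag(\xx^*)S^\perp$, then conjugate the flow on the affine patch $U$ by an affine chart) is essentially the standard proof in those references. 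The graph-theoretic core you flag as the main obstacle does go through: under complex balancing the matrix $A_{\vv\kk}\diag(\Psi(\xx^*))$ has zero row and column sums, so with $c_{ij}=\kk_{ij}(\xx^*)^{\yy_i}$ one gets $\vv w^T A_{\vv\kk}\diag(\Psi(\xx^*))\vv w = -\tfrac12\sum_{(i,j)\in E} c_{ij}(w_i-w_j)^2$, which vanishes iff $\vv w$ is constant on the (strongly) connected components; applied to $\vv w = Y^T\diag(1/\xx^*)\vv v$ this is exactly $\vv v\in\diag(\xx^*)S^\perp$, as you claim.

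The one genuine gap is your last clause. Hurwitzness of the reduced Jacobian at $\vv z=\vv 0$ does not ``make $\vv 0$ the unique linearly stable steady state'': nothing in your argument rules out other steady states of the reduced system elsewhere in $U$. Uniqueness is a separate, global fact due to Horn and Jackson \cite{HornJackson1972}: for a complex-balanced system every positive steady state is complex balanced, the positive steady-state set is $\{\xx\in\rrpp^n : \ln\xx-\ln\xx^*\in S^\perp\}$, and this set meets each compatibility class in exactly one point (a Birch-type/strict-convexity argument with the same Lyapunov function $V$); only with this in hand is $\xx^*$ the sole steady state in $U$, hence $\vv 0$ the unique (and linearly stable) steady state of $\dot{\vv z}=\vv F(\vv z;\vv\kk,\xx_0)$. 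Relatedly, the clause that $U$ ``coincides with the stable manifold'' deserves an explicit sentence: from your spectral decomposition the stable subspace is $S$ and the centre subspace is $\Ker J=\diag(\xx^*)S^\perp$, and since $\xx^*$ attracts a relative neighbourhood of itself in $U$ at an exponential rate (the reduced linearization is Hurwitz), that neighbourhood lies in the $s$-dimensional local stable manifold, which is tangent to $S$; as both are $s$-dimensional invariant manifolds through $\xx^*$, they coincide locally with $U$. You assemble all the ingredients for this but never tie them back to that part of the statement.
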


Algebraically, complex-balanced systems have toric structures. Its set of positive steady states is a toric variety, and after a suitable change of variables, so is the set of rate constants $\mc K(G)$ for which the system is complex-balanced. The \df{toric locus} $\mc K(G)$ is in general a set of measure zero; more precisely, $\mc K(G)$ is cut out by $\delta$ algebraic equations, where 
\eq{
    \delta = |V| - \ell - \dim S
}
is the \df{deficiency} of the network $G$ with $\ell$ connected components and stoichiometric subspace $S$~\cite{feinberg1972complex, horn1972necessary, ToricDynSys2009}.

\begin{ex}
\label{ex:intro-deficiency}
\begin{figure}[h!]
\centering 
    \includegraphics[width=4in]{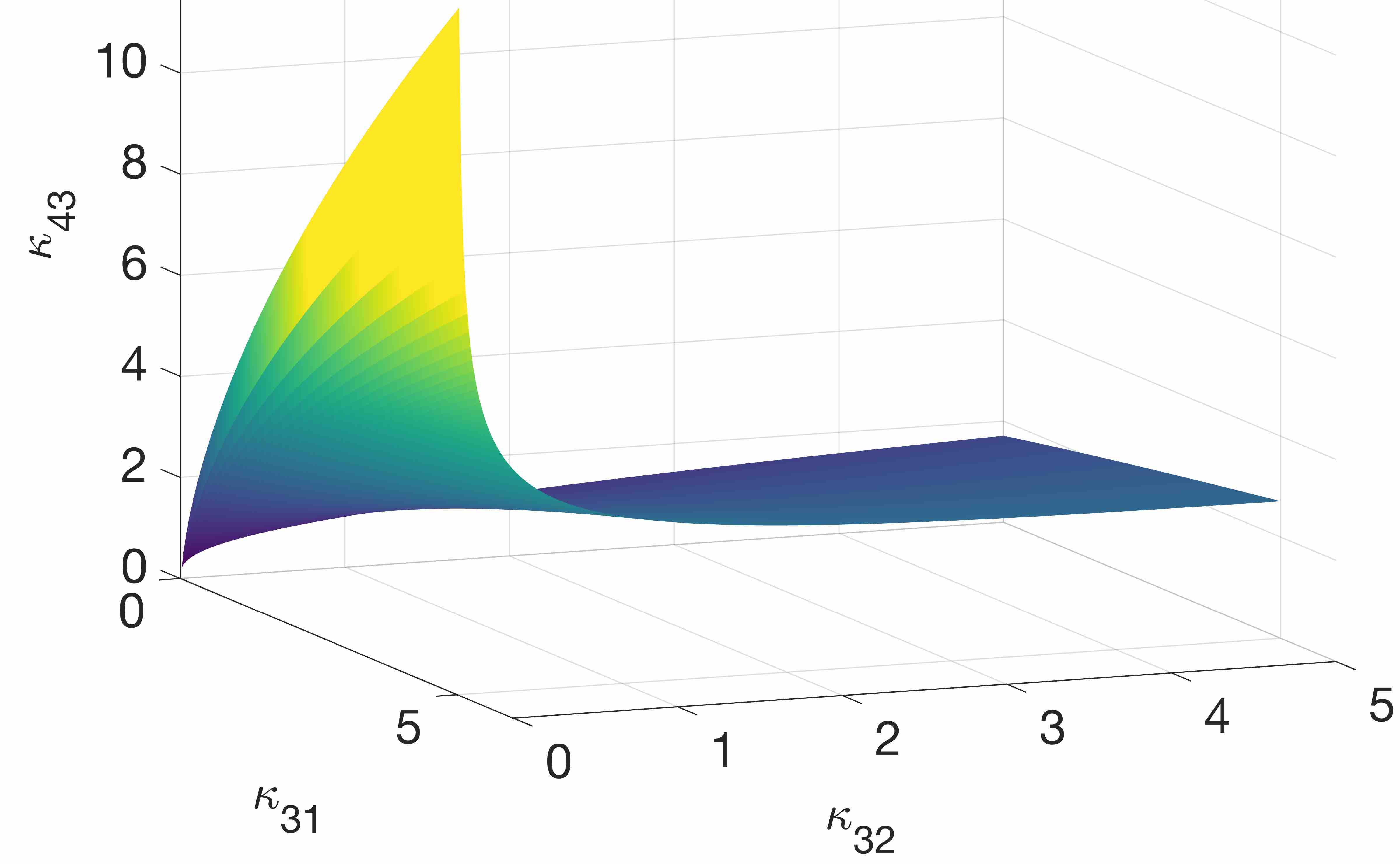}
\caption{The toric locus $\mc K(G)$ of the  deficiency one network in \Cref{fig:introEx}, with $\kk_{21} = \kk_{14}=1$. The toric locus is a measure zero set.}
\label{fig:introEx-param}
\end{figure}

    The deficiency of the network $G$ in \Cref{fig:introEx} is $\delta = 1$. Since $G$ is weakly reversible, there is one algebraic condition on the rate constants $\vv\kk$ that is necessary and sufficient for $(G,\vv\kk)$ to be complex-balanced, namely $K_1K_2K_3 = K_4^3$, where 
    \eq{ 
        K_1 &= \kk_{43}\kk_{21}(\kk_{31}+\kk_{32})\\
        K_2 &= \kk_{14}\kk_{43}\kk_{32}\\ 
        K_3 &= \kk_{21}\kk_{14}\kk_{43}\\ 
        K_4 &= (\kk_{31}+\kk_{32})\kk_{21}\kk_{14}
    } 
    are given by the Matrix-Tree Theorem~\cite{ToricDynSys2009}. After simplification, the condition becomes $\kk_{43}^3\kk_{32} = \kk_{21}\kk_{14}(\kk_{31}+\kk_{32})^2$, a slice of which is plotted in \Cref{fig:introEx-param}. As the figure demonstrates, for arbitrarily chosen $\vv\kk$, the system is not complex-balanced.
    In the case when the condition on the rate constants is met, any positive steady state $(x,y,z)$ must satisfied \eqref{eq:CB}: 
    \eq{ 
        \kk_{14} x^3 &= \kk_{21} y^3 + \kk_{31} z^3 \\  
        \kk_{21} y^2   &= \kk_{32} z^3 \\
        (\kk_{31}+\kk_{32})  z^3 &= \kk_{43} xyz \\
        \kk_{43} xyz &= \kk_{14} x^3 ,
    }
    which describe the balancing of flows through each vertex. 
\end{ex}

The only networks that are complex-balanced for all $\vv\kk \in \rrpp^r$ are weakly reversible and have deficiency zero~\cite{horn1972necessary}. Therefore when $\delta \geq 1$, perturbations in the rate constants leave the algebraic conditions unsatisfied; complex-balancing is \emph{not} robust in general.  

For networks with $\delta \geq 1$, it is possible that although $(G,\vv\kk)$ itself is not complex-balanced, it is \emph{dynamically equivalent} to a complex-balanced system~\cite{CraciunJinYu2019}, i.e., there exist a different network $G'$ and a vector of rate constants $\vv\kk'$ such that $(G',\vv\kk')$ is complex-balanced and shares the same associated ODEs as $(G,\vv\kk)$. However, not every network is afforded this flexibility. For example, the network
\begin{center} 
\begin{tikzpicture}[scale=1.5]
    \node at (0,1.1) {}; 
    \node (0) at (0,0) {$\sf{0}$};
    \node (x) at (1.25,0)   {$\sf{X}$};
    \node (xy) at (1.25,1)  {$\sf{X}+\sf{Y}$};
    \node (y) at (0,1) {$\sf{Y}$};
    \draw [fwdrxn] (0) -- (x); 
    \draw [fwdrxn] (x) -- (xy); 
    \draw [fwdrxn] (xy) -- (y); 
    \draw [fwdrxn] (y) -- (0); 
\end{tikzpicture}
\end{center}
does not admit a different weakly reversible structure. Thus, for a given choice of $\vv\kk > 0$, either $(G,\vv\kk)$ itself is complex-balanced, or the same associated dynamics is never generated by a complex-balanced system.

\subsection{Permanence} 
\label{sec:permanence} 

The notion of permanence is prominent in interaction network models, especially for long term survival dynamics in population models. Informally, a system is permanent if there is a globally attracting compact set.  
The formal definition of permanence is as follow.

\begin{defn}
\label{def:permanence}
    A system of ODEs $\dot{\xx} = \vv f(\xx, t)$ on $U \subseteq \rr^n$ is said to be \df{permanent on $U$} if there exists a compact set $K \subseteq U$ such that any solution $\xx(t)$ with initial condition in $U$ is eventually in $K$, i.e., there exists $t_0 \geq 0$ such that $\xx(t) \in K$ for all $t \geq t_0$.
\end{defn}

A mass-action system $(G,\vv\kk)$ is said to be  \df{permanent} if the associated system $\dot{\xx} = \vv f(\xx)$ is permanent on every compatibility class. Permanence for variable-$\vv\kk$ mass-action system is defined in a similar manner, where the aforementioned set $K$ on a compatibility class is globally attracting for \emph{any} coefficient functions satisfying $\eps \leq \kk_{ij}(t) \leq \eps^{-1}$. 

For complex-balanced systems, 
permanence is sufficient to prove the Global Attractor Conjecture~\cite{CraciunNazarovPantea2013GAC}. 
It is believed that permanence is not only a property of complex-balanced systems, but of the more general weakly reversible systems.

\begin{conj}[Permanence Conjecture~\cite{CraciunNazarovPantea2013GAC}]
    Any weakly reversible mass-action system is permanent. 
\end{conj}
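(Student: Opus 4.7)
The plan is to reduce the conjecture to the classical two-step criterion for permanence: (i) \emph{ultimate boundedness}, that every solution $\xx(t)$ eventually enters a uniform compact subset of its compatibility class, and (ii) \emph{no boundary attractors}, that the $\omega$-limit set of any interior trajectory is disjoint from $\partial \rrp^n$. Once both are established, a standard repellor--attractor argument (Hofbauer--Sigmund style) packages them into a globally attracting compact set inside each compatibility class, which is precisely permanence.

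For step (i), I would exploit weak reversibility at the complex level. By the Matrix--Tree theory underlying \eqref{eq:CB}, on each strongly connected component there is a positive vector $\vv c \in \rrpp^m$ on complexes with $\sum_{(i,j)\in E} c_i\kk_{ij}(\yy_j - \yy_i) = \vv 0$. Pairing this identity with a strictly positive linear functional $\vv w \in \rrpp^n$ and comparing $\sum \kk_{ij}\xx^{\yy_i}\braket{\vv w}{\yy_j - \yy_i}$ to its would-be complex-balanced counterpart, one can show that the linear functional $H_{\vv w}(\xx) = \braket{\vv w}{\xx}$ satisfies $\dot H_{\vv w}(\xx) < 0$ once $\|\xx\|$ is large enough, giving ultimate boundedness. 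Our assumption $V \subset (\{0\} \cup [1,\infty))^n$ is crucial here, since it ensures the high-degree monomials $\xx^{\yy_i}$ dominate the low-degree ones at infinity.

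For step (ii), which is the crux, I would pursue a tropical analysis. Suppose a hypothetical $\omega$-limit point $\xx^*$ lies on a proper face $F = \{\xx : x_i = 0,\ i \in W\}$ and is approached by $\xx(t)$. Extract the asymptotic exponents $\alpha_i = \liminf_{t \to \infty}\log x_i(t)/t$, and form the tropical sub-network $G_{\vv\alpha}$ consisting of reactions $\yy_i \to \yy_j$ whose reactants minimize $\braket{\vv\alpha}{\yy_i}$ on each connected component of $G$. Weak reversibility of $G$ is inherited by $G_{\vv\alpha}$ on each component, and one then argues that $G_{\vv\alpha}$ must contain a reaction producing mass in some coordinate $i \in W$, contradicting $\xx(t) \to F$. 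This is the mechanism behind the proofs in \cite{CraciunNazarovPantea2013GAC, GopalkrishnanMillerShiu2014} for two-dimensional and strongly endotactic networks respectively.

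The hard part will be pushing the tropical argument past the endotacticity barrier. The clean version assumes that $\vv\alpha$ picks out a single ``dominant'' tropical system, but in general several complexes tie for the minimum $\braket{\vv\alpha}{\yy_i}$ and the induced leading-order ODE on $F$ can itself exhibit non-trivial, potentially recurrent, dynamics rather than an unambiguous inward drift. A complete proof would therefore need to iterate: the leading-order system on $F$ is again a weakly reversible mass-action system on fewer species or in a smaller stoichiometric subspace, suggesting an induction on $\dim S$ or on $|V|$. Closing this induction --- in particular, ruling out the pathology that the reduced dynamics on $F$ faithfully mimics the original system rather than being genuinely simpler --- is exactly where all existing partial results stall, and where the main effort of any new proof of the Permanence Conjecture would have to concentrate.
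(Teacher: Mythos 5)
The statement you are attempting is stated in the paper as a \emph{conjecture} (the Permanence Conjecture of Craciun, Nazarov and Pantea), not a theorem: the paper offers no proof of it, explicitly records that it is known only in special cases (planar systems, single linkage class, strongly endotactic networks), and uses it purely as a \emph{hypothesis} in \Cref{prop:robustpermanence} and \Cref{cor:perm_conj}. So there is no proof in the paper to compare against, and your proposal must be judged as an attack on an open problem --- which, by your own admission in your final paragraph, it does not close.

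Concretely, there are two genuine gaps. First, step (i) is not the routine preliminary you present it as: for non-conservative weakly reversible networks, the existence of a positive linear functional $H_{\vv w}$ with $\dot H_{\vv w}<0$ outside a compact set is essentially the (still open) boundedness part of the problem. The vector $\vv c$ supplied by the Matrix--Tree theorem balances fluxes at a \emph{steady state}; pairing it with $\vv w$ gives no control over $\sum_{(i,j)\in E} \kk_{ij}\xx^{\yy_i}\braket{\vv w}{\yy_j-\yy_i}$ along an arbitrary escaping sequence, because the sign of that sum at infinity is governed by which source monomials dominate in the direction of escape --- exactly the tropical/endotactic analysis you defer to step (ii). (Also, the paper's restriction $V\subset(\{0\}\cup[1,\infty))^n$ is imposed to get Lipschitz continuity and forward-invariance of $\rrpp^n$ near the boundary, not to make high-degree monomials dominate at infinity.) Second, and decisively, your step (ii) is conceded rather than proved: when several complexes tie for the minimum of $\braket{\vv\alpha}{\yy_i}$, ruling out recurrent leading-order dynamics on a boundary face \emph{is} the content of the conjecture, not a lemma you may invoke, and your proposed induction on $\dim S$ or $|V|$ is exactly where you acknowledge all existing partial results stall. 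A sketch that ends by identifying the missing step is a research program, not a proof; if you want a provable statement in the spirit of this paper, restrict to the special classes listed in \Cref{cor:specialcases}, where permanence is actually established in the cited works.
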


\begin{conj}[Permanence Conjecture for variable-$\vv\kk$ systems~\cite{CraciunNazarovPantea2013GAC}]
    Any weakly reversible variable-$\vv\kk$ mass-action system is permanent. 
\end{conj}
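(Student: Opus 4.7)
The plan is to combine a local implicit-function analysis at the complex-balanced steady state, a uniform local basin of attraction built from the linearization, and a global trapping argument driven by robust permanence.

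First, I would apply the implicit function theorem to $\vv f(\xx;\vv\kk)=\vv 0$ at the complex-balanced steady state $\xx^*\in U$ of $(G,\vv\kk^*)$. The required non-degeneracy is supplied by \Cref{thm:JohnstonLinearizedStability}: in Johnston's diffeomorphic reduction, the Jacobian of $\vv F(\cdot;\vv\kk^*,\vv x_0)$ at the reduced steady state $\vv z^*$ has all eigenvalues with strictly negative real part, hence is invertible on the tangent space of $U$. This produces a $C^1$ branch $\vv\kk \mapsto \xx^*(\vv\kk)\in U$ on some ball $B(\vv\kk^*,\eps_0)$ with $\xx^*(\vv\kk^*)=\xx^*$, and a neighborhood of $\xx^*$ in $U$ inside which $\xx^*(\vv\kk)$ is the only steady state; continuity of the spectrum keeps $\xx^*(\vv\kk)$ linearly stable throughout.

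Next, to obtain a local basin of attraction whose radius is uniform in $\vv\kk$, I would solve the Lyapunov matrix equation $PJ+J^{\top}P=-I$ at $J=\grad\vv F(\vv z^*;\vv\kk^*,\vv x_0)$ and take $V_{\vv\kk}(\vv z)=(\vv z-\vv z^*(\vv\kk))^{\top}P(\vv z-\vv z^*(\vv\kk))$ as a strict Lyapunov function for the perturbed reduced vector field near $\vv z^*(\vv\kk)$. Continuity in $\vv\kk$ of both the vector field and its steady state then yields constants $\delta>0$ and $\eps_1>0$ such that $B(\xx^*(\vv\kk),\delta)\cap U$ is forward-invariant and contained in the basin of attraction of $\xx^*(\vv\kk)$ for every $\vv\kk\in B(\vv\kk^*,\eps_1)$.

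Finally, I would use robust permanence to funnel every trajectory into this basin. By hypothesis there exist $\eps_2>0$ and a compact $K\subset U$ such that, uniformly for $\vv\kk\in B(\vv\kk^*,\eps_2)$, every trajectory of $(G,\vv\kk)$ starting in $U$ eventually enters $K$. Since $(G,\vv\kk^*)$ is complex-balanced and permanent, the Global Attractor Conjecture holds for it on $U$~\cite{CraciunNazarovPantea2013GAC}, so every trajectory of $(G,\vv\kk^*)$ starting in $K$ converges to $\xx^*$. A finite-cover argument, using forward-invariance of $B(\xx^*,\delta/2)$, upgrades this to a uniform time $T>0$ by which every such trajectory lies in $B(\xx^*,\delta/2)$. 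Continuous dependence of solutions on parameters over the compact interval $[0,T]$, together with $\xx^*(\vv\kk)\to\xx^*$ as $\vv\kk\to\vv\kk^*$, then yields $\eps_3>0$ such that every trajectory of $(G,\vv\kk)$ starting in $K$ lies in $B(\xx^*(\vv\kk),\delta)$ at time $T$. Setting $\eps=\min(\eps_0,\eps_1,\eps_2,\eps_3)$, each trajectory of $(G,\vv\kk)$ from $U$ first enters $K$, then falls inside the uniform local basin by time $T$, and then converges to $\xx^*(\vv\kk)$; uniqueness of the attracting point is automatic, since any other steady state in $U$ would be a stationary trajectory subject to the same funnel and hence equal to $\xx^*(\vv\kk)$. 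The principal obstacle is the triple uniformity in $\delta$, $T$, and $\vv\kk$: the Lyapunov equation buys the first, compactness plus forward-invariance of the local basin buys the second, and Gronwall-type continuous dependence on parameters closes the third.
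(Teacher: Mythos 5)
The statement you were asked about is the Permanence Conjecture for variable-$\vv\kk$ systems: \emph{every} weakly reversible variable-$\vv\kk$ mass-action system is permanent. This is an open conjecture; the paper does not prove it and only cites special cases (planar systems, single connected components, strongly endotactic networks). Your proposal does not address this statement at all. What you have written is, essentially, a proof of the paper's main result (\Cref{thm:main-k}): that a \emph{robustly permanent} complex-balanced system remains globally stable after the rate constants are perturbed. Indeed, your argument (implicit function theorem at the complex-balanced steady state, a quadratic Lyapunov function from the linearization to get a uniform local basin, and robust permanence to funnel trajectories into that basin) closely parallels the structure of \Cref{thm:SmithWaltman} and its application in the paper, and as a proof of \emph{that} theorem it is a reasonable sketch.

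But as a proof of the conjecture it is circular: you invoke ``by hypothesis there exist $\eps_2>0$ and a compact $K\subset U$ such that \dots every trajectory \dots eventually enters $K$,'' which is precisely the permanence the conjecture asks you to establish. Nothing in your argument produces such a trapping set for a general weakly reversible network with time-varying coefficients $\eps \leq \kk_{ij}(t) \leq \eps^{-1}$. Moreover, a weakly reversible system need not be complex-balanced and need not have a linearly stable steady state, so the entire local apparatus (the Johnston reduction, the Lyapunov matrix equation $PJ+J^{\top}P=-I$) has no anchor; and permanence is fundamentally a statement about the behaviour of trajectories near the boundary of $\rrpp^n$ and at infinity, which local analysis near an interior point cannot reach. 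The genuinely hard content of the conjecture --- ruling out trajectories that approach $\partial\rrpp^n$ or escape to infinity for arbitrary weakly reversible networks --- is untouched.
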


In this work, we define the notion of robust permanence for a family of systems. We prove that robust permanence implies the global stability of any system that arises from perturbing the rate constants of a complex-balanced system.

\begin{defn}
\label{def:robustperm}
    A system of ODEs  
    $\dot{\xx} = \vv f(\xx; \vv\kk^*)$
    on $U \subseteq \rr^n$ is said to be \df{robustly permanent on $U$ with respect to $\vv\kk^*$} if there exist a compact set $K \subseteq U$ and $\eps > 0$ such that whenever $\vv\kk \in B(\vv\kk^*,\eps)$, any solution to $\dot\xx = \vv f(\xx; \vv\kk)$ with initial condition in $U$ is eventually in $K$. 
\end{defn}

Note that robust permanence with respect to $\vv\kk^*$ implies that $\dot{\xx} = \vv f(\xx; \vv \kk)$ is permanent for all $\vv\kk \in B(\vv\kk^*, \eps)$. Moreover, the attracting compact set $K$ is common to all $\vv\kk \in B(\vv\kk^*, \eps)$.

We say a mass-action system $(G, \vv\kk^*)$ is \df{robustly permanent with respect to $\vv\kk^*$} if the system $\dot{\xx} = \vv f(\xx; \vv\kk^*)$ is robustly permanent on every compatibility class. In this case, the same compact set $K$ is globally attracting on the compatibility class for all $\vv\kk \in B(\vv\kk^*,\eps)$.

Robust permanence for complex-balanced systems follows from the Permanence Conjecture for variable-$\vv\kk$ systems.  To date, the conjecture has been proven for systems on $\rrpp^2$~\cite{CraciunNazarovPantea2013GAC}, networks with one connected component~\cite{Anderson2011GAC, BorosHofbauer2019}, and strongly endotactic networks~\cite{GopalkrishnanMillerShiu2014}.

\begin{prop}
\label{prop:robustpermanence}
    Assume the Permanence Conjecture for variable-$\vv\kk$ systems. Then a complex-balanced system $(G,\vv\kk^*)$ is robustly permanent with respect to $\vv\kk^*$. In particular, any complex-balanced steady state of $(G,\vv\kk^*)$ is globally attracting within its compatibility class.
\end{prop}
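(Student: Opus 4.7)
The plan is to deduce robust permanence under constant-coefficient perturbations from the variable-$\vv\kk$ Permanence Conjecture, by viewing a small ball around $\vv\kk^*$ in rate-constant space as a subfamily of the admissible time-varying coefficients in \eqref{eq:vk-mas}. First I would note that since $(G, \vv\kk^*)$ is complex-balanced, the underlying network $G$ is weakly reversible and every $\kk_{ij}^* > 0$. Then I would fix $\eps > 0$ small enough that $B(\vv\kk^*, \eps) \subset \rrpp^r$, and choose $\eps_0 \in (0,1)$ such that
\eq{
    \eps_0 \leq \kk_{ij} \leq \eps_0^{-1} \quad \text{for every } \vv\kk \in B(\vv\kk^*, \eps) \text{ and every } (i,j) \in E;
}
this is possible because the closure of $B(\vv\kk^*, \eps)$ is compact in $\rrpp^r$.

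Next, fixing any compatibility class $U$, the Permanence Conjecture for variable-$\vv\kk$ systems, applied to the weakly reversible network $G$ with uniform bound $\eps_0$, furnishes a compact set $K \subseteq U$ such that every solution of \eqref{eq:vk-mas} with initial condition in $U$ and coefficient functions satisfying $\eps_0 \leq \kk_{ij}(t) \leq \eps_0^{-1}$ is eventually contained in $K$. For any $\vv\kk \in B(\vv\kk^*, \eps)$, the associated system of $(G, \vv\kk)$ is precisely the special case of \eqref{eq:vk-mas} with constant coefficient functions $\kk_{ij}(t) \equiv \kk_{ij} \in [\eps_0, \eps_0^{-1}]$, so every solution of $(G, \vv\kk)$ starting in $U$ is eventually trapped in the same $K$. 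Since $\eps$ depends only on $\vv\kk^*$ and not on $U$, the same argument applies on each compatibility class, establishing that $(G, \vv\kk^*)$ is robustly permanent with respect to $\vv\kk^*$.

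For the concluding sentence, I would specialize to $\vv\kk = \vv\kk^*$: the system $(G, \vv\kk^*)$ is itself permanent on every compatibility class, and the known result that permanence implies the Global Attractor Conjecture for complex-balanced systems~\cite{CraciunNazarovPantea2013GAC} immediately yields that the unique complex-balanced steady state in each class is globally attracting. No step here is technically difficult — the whole proof is essentially a translation between ``constant perturbed coefficients'' and ``uniformly bounded variable coefficients'' — but the one idea to single out, and the place where the hypothesis really does the work, is recognizing that the entire ball $B(\vv\kk^*, \eps)$ of constant rate vectors embeds inside the admissible set of the variable-$\vv\kk$ conjecture, so that a \emph{single} compact attractor $K$ can be chosen uniformly in $\vv\kk$ (and in $t$).
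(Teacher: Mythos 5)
Your proposal is correct and follows essentially the same argument as the paper: fix uniform bounds $\eps_0 \leq \kk_{ij} \leq \eps_0^{-1}$ valid on a ball around $\vv\kk^*$, invoke the variable-$\vv\kk$ Permanence Conjecture for the weakly reversible network $G$ to obtain a single compact attractor per compatibility class, specialize to constant coefficient functions, and conclude with the known fact that permanence implies global stability for complex-balanced systems.
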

\begin{proof}
    Let $\eps >0$ be chosen such that $\eps < \kk^*_{ij} < \eps^{-1}$ for every $(i,j) \in E$. Choose $\delta > 0$ such that the ball $B(\vv\kk^*,\delta)$ lies within the box $[\eps, \eps^{-1}]^r$. 
    Since complex-balanced systems are weakly reversible, the Permanence Conjecture for variable-$\vv\kk$ systems provides a globally attracting compact set, one for each compatibility class, for $\dot{\xx} = \vv f(\xx; \vv\kk(t))$ with any $\eps \leq \kk_{ij}(t) \leq \eps^{-1}$. Robust permanence with respect to $\vv\kk^*$ follows by considering constant coefficient functions $\vv\kk(t) \equiv \vv\kk$ for any $\vv\kk \in B(\vv\kk^*,\delta)$. It is well-known that permanence implies global stability for complex-balanced systems. 
\end{proof}

\section{Main result}
\label{sec:main}

In this section, we prove that if a complex-balanced system $(G,\vv\kk^*)$ is robustly permanent, then any mass-action system $(G,\vv\kk)$, with $\vv\kk$ is sufficiently close to $\vv\kk^*$, is globally stable, even though the perturbed system $(G,\vv\kk)$ is in general \emph{not} complex-balanced. 

We use the following result from \cite{SmithWaltman1999}. Consider a system of ODEs (with parameters $\vv\kk$) 
    \eqn{
    \label{eq:SmithWaltman}
        \frac{d\vv z}{dt} = \vv F(\vv z; \vv \kk),
    }
where $\vv F: U \times \Lambda \to \rr^n$ with $U \subseteq \rr^n$ and $\Lambda \subseteq \rr^r$. Suppose the Jacobian matrix $\grad_{\vv z}\vv F(\vv z; \vv \kk)$ is continuous on $U \times \Lambda$. Assume that solutions of the initial value problems are unique and remain in $U$ for all $t \geq 0$ and $\vv \kk \in \Lambda$. Write $\vv z(t; \vv z_0, \vv \kk)$ for the solution of \eqref{eq:SmithWaltman} with initial condition $\vv z(0) = \vv z_0$.

\newpage 
\begin{thm}[\cite{SmithWaltman1999}]
\label{thm:SmithWaltman}
For the system \eqref{eq:SmithWaltman},  assume that  
\begin{enumerate}[label={\textrm{\roman*)\,\,}}]
    \item $\vv F(\vv z^*; \vv \kk^*) = \vv 0$, where $\vv z^*$ is an interior point of $U$;
    \item all eigenvalues of $\grad_\vv z\vv F(\vv z^*; \vv \kk^*)$ have negative real part; and
    \item $\vv z^*$ is globally attracting for solutions of \eqref{eq:SmithWaltman} when $\vv \kk = \vv \kk^*$.
\end{enumerate}
Suppose that there exists a compact set $K \subseteq U$ such that for each $\vv \kk \in \Lambda$ and each $\vv z_0 \in U$, we have $\vv z(t; \vv z_0, \vv \kk) \in K$ for all large $t$. Then there exist $\eps > 0$ and a unique point $\hat{\vv z}(\vv \kk) \in U$ for all $\vv\kk \in B(\vv\kk^*,\eps)$  such that $\vv F(\hat{\vv z}(\vv \kk); \vv\kk) = \vv 0$ and $ \vv z(t; \vv z_0, \vv \kk) \xrightarrow{t\to\infty} \hat{\vv z}(\vv \kk)$ for all $\vv z_0 \in U$. 
\end{thm}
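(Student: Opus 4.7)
The plan is to combine three ingredients: the implicit function theorem to continue the equilibrium $\vv z^*$, a linearization argument to obtain local asymptotic stability of the continued equilibria with a \emph{uniform} basin, and a compactness-plus-continuity argument that uses the common attracting set $K$ to bridge global attraction at $\vv\kk^*$ to global attraction at nearby $\vv\kk$.

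First I would apply the implicit function theorem. Since all eigenvalues of $\grad_{\vv z}\vv F(\vv z^*; \vv\kk^*)$ have negative real parts, this Jacobian is invertible, so there exist neighborhoods $B(\vv z^*, r) \subset U$ and $B(\vv\kk^*, \eps_1) \subset \Lambda$ together with a continuous branch $\hat{\vv z}\colon B(\vv\kk^*, \eps_1) \to B(\vv z^*, r)$ satisfying $\hat{\vv z}(\vv\kk^*) = \vv z^*$ and $\vv F(\hat{\vv z}(\vv\kk); \vv\kk) = \vv 0$. Continuity of the map $\vv\kk \mapsto \grad_{\vv z}\vv F(\hat{\vv z}(\vv\kk); \vv\kk)$ combined with continuity of eigenvalues keeps the spectrum in the open left half-plane for $\vv\kk$ near $\vv\kk^*$. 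A standard Lyapunov-function construction — for instance, solving the Lyapunov equation for $\grad_{\vv z}\vv F(\vv z^*; \vv\kk^*)$ and arguing that the resulting quadratic form still decreases along trajectories of the perturbed nonlinear systems — then yields a forward-invariant open neighborhood $W$ of $\vv z^*$ and some $\eps_2 \leq \eps_1$ such that for every $\vv\kk \in B(\vv\kk^*, \eps_2)$, $W$ is contained in the basin of attraction of $\hat{\vv z}(\vv\kk)$.

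Next I would use global attraction at $\vv\kk^*$ together with compactness of $K$. For the unperturbed system, every trajectory from $K$ eventually enters $W$; by continuous dependence on initial conditions, each $\vv z_0 \in K$ has an open neighborhood $N(\vv z_0)$ and a time $T(\vv z_0)$ with $\vv z(T(\vv z_0); \vv z_0', \vv\kk^*) \in W$ for all $\vv z_0' \in N(\vv z_0)$. Extracting a finite subcover of $K$ and taking $T$ as the maximum of the corresponding entry times — using forward invariance of $W$ so that earlier entries remain inside — produces a single $T$ with $\vv z(T; \vv z_0, \vv\kk^*) \in W$ for \emph{all} $\vv z_0 \in K$. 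Continuous dependence on the parameter, applied on the compact set $K$, then yields $\eps \leq \eps_2$ such that $\vv z(T; \vv z_0, \vv\kk) \in W$ for every $\vv z_0 \in K$ and every $\vv\kk \in B(\vv\kk^*, \eps)$. Combining with the hypothesis that all trajectories in $U$ eventually enter $K$, I conclude that for each such $\vv\kk$ and each $\vv z_0 \in U$, the trajectory enters $W$ in finite time and thereafter converges to $\hat{\vv z}(\vv\kk)$; uniqueness of the equilibrium inside $U$ is then automatic, since a second equilibrium would contradict this global convergence.

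The hard part is the compactness-and-continuity step that upgrades ``every trajectory from $K$ eventually enters $W$'' into a single finite time $T$ that works \emph{simultaneously} for all of $K$ and for all parameters $\vv\kk$ near $\vv\kk^*$. This is precisely where the hypothesis that $K$ is a common attracting set for the entire family $\{\vv F(\,\cdot\,; \vv\kk)\}_{\vv\kk \in \Lambda}$ — rather than merely the existence of some compact attractor parameter by parameter — is indispensable: without such a $K$, the usual pointwise perturbation results from ODE theory would give only a neighborhood-dependent and possibly non-uniform entry time, which could not be combined with continuity in $\vv\kk$ to yield a single working $\eps$.
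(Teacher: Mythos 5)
The paper does not actually prove this theorem---it is imported from \cite{SmithWaltman1999} as a black box---so there is no internal proof to compare against; your argument is a correct reconstruction and follows the same strategy as the original source: implicit-function-theorem continuation of the equilibrium, a uniform local basin $W$ obtained from the linearization, a compactness argument producing a single entry time $T$ into $W$ from $K$ under $\vv\kk^*$, and joint continuity of the flow in $(\vv z_0,\vv\kk)$ over the compact set $K$ to transfer this to nearby parameters. The one step you should write out carefully is the uniform basin: the quadratic Lyapunov form obtained from the Lyapunov equation must be centred at the moving equilibrium $\hat{\vv z}(\vv\kk)$ rather than at $\vv z^*$ (otherwise it is not a Lyapunov function for the perturbed system), and one then checks that a \emph{fixed} neighbourhood of $\vv z^*$ lies inside a suitable sublevel set, forward-invariant and attracted to $\hat{\vv z}(\vv\kk)$, for all $\vv\kk$ sufficiently close to $\vv\kk^*$; this is routine since $\hat{\vv z}(\vv\kk)\to\vv z^*$ and the Jacobian varies continuously, but it is the place where uniformity in $\vv\kk$ must be earned rather than asserted.
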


Moreover, the function $\vv\kk \mapsto \hat{\vv z}(\vv \kk)$ is continuous on a neighbourhood of $\vv\kk^*$~\cite{SmithWaltman1999}; thus $\hat{\vv z}(\vv\kk)$ is an interior point of $U$.

Our main result is the following.
\begin{thm}
\label{thm:main-k}
    Let $(G,\vv\kk^*)$ be a complex-balanced system that is robustly permanent with respect to $\vv\kk^*$. Then on every compatibility class $U$, there exists $\eps > 0$ such that for every $\vv \kk \in B(\vv\kk^*,\eps)$, the mass-action system $(G,\vv\kk)$ has a unique globally attracting point within $U$.
\end{thm}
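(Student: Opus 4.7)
The plan is to reduce the dynamics on each compatibility class to a lower-dimensional system via \Cref{thm:JohnstonLinearizedStability}, and then feed the resulting reduced family into the Smith--Waltman perturbation result (\Cref{thm:SmithWaltman}). Fix a compatibility class $U = (\xx_0 + S)_>$, and let $s = \dim S$. Because the stoichiometric subspace $S$ is determined by $G$ alone, the same $U$ is forward-invariant for the associated system of $(G,\vv\kk)$ at \emph{every} choice of $\vv\kk > \vv 0$. Choose a linear parametrization of the affine set $\xx_0 + S$ by coordinates $\vv z \in \tilde U \subseteq \rr^s$; this is a $\vv\kk$-independent diffeomorphism $U \to \tilde U$ onto an open set, under which the mass-action system $(G,\vv\kk)$ pushes forward to a reduced system
\eq{
    \dot{\vv z} = \vv F(\vv z; \vv\kk, \vv x_0),
}
polynomial in $\vv z$ and linear in $\vv\kk$. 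In particular, $\grad_{\vv z}\vv F$ is continuous on $\tilde U \times B(\vv\kk^*,\delta)$ for any $\delta > 0$.

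Next, verify the hypotheses of \Cref{thm:SmithWaltman} for this family at $\vv\kk = \vv\kk^*$. Let $\xx^* \in U$ be the complex-balanced steady state of $(G,\vv\kk^*)$ sitting in $U$, and let $\vv z^* \in \tilde U$ be its image. Condition (i) holds by construction. For condition (ii), apply \Cref{thm:JohnstonLinearizedStability}: the reduced system at $\vv\kk^*$ has $\vv z^*$ as a linearly stable steady state, so all eigenvalues of $\grad_{\vv z}\vv F(\vv z^*;\vv\kk^*,\vv x_0)$ have negative real part. Condition (iii) follows because robust permanence of $(G,\vv\kk^*)$ implies permanence, and permanence of a complex-balanced system implies global attraction of $\xx^*$ within $U$ (this is exactly what is used in \Cref{prop:robustpermanence}). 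Finally, the uniform compact-attractor hypothesis of \Cref{thm:SmithWaltman} is precisely the conclusion of \Cref{def:robustperm}: there exist a compact $K \subseteq U$ and $\delta > 0$ such that every solution of $(G,\vv\kk)$ starting in $U$ with $\vv\kk \in B(\vv\kk^*,\delta)$ eventually lies in $K$; its image $\tilde K$ under the parametrization provides the required compact absorbing set in $\tilde U$.

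With all four hypotheses in place, \Cref{thm:SmithWaltman} yields an $\eps \in (0,\delta]$ and, for each $\vv\kk \in B(\vv\kk^*,\eps)$, a unique point $\hat{\vv z}(\vv\kk) \in \tilde U$ with $\vv F(\hat{\vv z}(\vv\kk);\vv\kk,\vv x_0) = \vv 0$ and $\vv z(t;\vv z_0,\vv\kk) \to \hat{\vv z}(\vv\kk)$ for every $\vv z_0 \in \tilde U$. Pulling back through the diffeomorphism produces a unique globally attracting point $\hat{\xx}(\vv\kk) \in U$ for the perturbed mass-action system $(G,\vv\kk)$, which is the desired conclusion.

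The only step that needs more than routine care is the decoupling of the reduction from $\vv\kk$: \Cref{thm:JohnstonLinearizedStability} is stated for the complex-balanced system $(G,\vv\kk^*)$ alone, so one must check that the ambient $s$-dimensional chart can be chosen independently of $\vv\kk$ and that the pushforward family $\vv F(\,\cdot\,;\vv\kk,\vv x_0)$ still satisfies the regularity required by Smith--Waltman. Since $U$ is a relatively open, forward-invariant affine slice of $\rrpp^n$ under $(G,\vv\kk)$ for every $\vv\kk$, and the coordinates on $\tilde U$ come from a fixed linear projection along $S^\perp$, this transport is unproblematic, and Johnston's linearized-stability statement at $\vv\kk^*$ immediately transfers to the reduced family.
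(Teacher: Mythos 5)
Your proof is correct and follows essentially the same route as the paper: reduce the dynamics on a fixed compatibility class to an $s$-dimensional system via \Cref{thm:JohnstonLinearizedStability}, verify the hypotheses of \Cref{thm:SmithWaltman} using linearized stability at $\vv z^*$, global attraction at $\vv\kk^*$ (permanence plus complex-balancing), and the common absorbing compact set supplied by robust permanence, then pull back through the diffeomorphism. Your explicit check that the chart can be chosen independently of $\vv\kk$, so that the entire perturbed family pushes forward through one fixed diffeomorphism with continuous Jacobian, is exactly the point the paper handles implicitly with its single map $\Phi$.
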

\begin{rmk}
    Note that from the definition of robust permanence, $\eps$ depends on $\vv\kk^*$. See \Cref{ex:ms} and \Cref{fig:ms-bifurcation}. 
\end{rmk}
\begin{proof}
    Let 
    \eqn{ \label{eq:pf-mas}
        \dot{\xx} = \vv f(\xx; \vv\kk)
    }
    be the system of ODEs associated to $(G,\vv\kk)$, which is the complex-balanced when $\vv\kk = \vv\kk^*$. Fix a compatibility class $\tilde U = (\vv x_0 + S)_>$, and let $\xx^*$ be the unique stable steady state in $\tilde U$. 
    Let $\tilde K \subseteq \tilde U$ be the globally attracting compact set within $\tilde U$ for \eqref{eq:pf-mas} for any $\vv\kk \in B(\vv\kk^*,\eps)$. Let $\Lambda = B(\vv\kk^*,\eps)$. 
    
    By \Cref{thm:JohnstonLinearizedStability}, the dynamics of $\dot{\xx} = \vv f(\xx; \vv\kk^*)$  on $\tilde U$ is diffeomorphic (via a diffeomorphism $\Phi$) to the lower dimensional system 
    \eqn{ \label{eq:pf-reduced}
        \dot{\vv z} = \vv F(\vv z; \vv\kk^*, \vv x_0), 
    }
    whose domain is $U = \Phi(\tilde U)$. Correspondingly, $\vv z^* = \Phi(\vv x^*)$ is the globally attracting steady state in $U$ and $K = \Phi(\tilde K)$ a globally attracting compact set on $U$. Moreover, robust permanence means that $K$ is globally attracting for  $\dot{\vv z} = \vv F(\vv z; \vv\kk, \vv x_0)$ for any $\vv\kk \in B(\vv\kk^*,\eps)$. Finally, the right-hand side of \eqref{eq:pf-mas} is differentiable in $\xx$ and linear in $\vv\kk$, so its Jacobian matrix is continuous in $\xx$ and $\vv \kk$. Thus the Jacobian matrix of \eqref{eq:pf-reduced} is continuous on $U \times \Lambda$. The theorem follows from \Cref{thm:SmithWaltman} and $\Phi^{-1}$. 
\end{proof}

As pointed out in \Cref{prop:robustpermanence},  robust permanence follows from the Permanence Conjecture for variable-$\vv\kk$ systems, which has implication for the Global Attractor Conjecture and has been proven for several special cases. For completeness, we state the following corollaries.

\begin{cor}
\label{cor:perm_conj}
    Assume the Permanence Conjecture for variable-$\vv\kk$ mass-action systems. Let $(G,\vv\kk^*)$ be complex-balanced. Then on every compatibility class $U$, there exists $\eps > 0$ such that for every $\vv \kk \in B(\vv\kk^*,\eps)$, the mass-action system $(G,\vv\kk)$ has a unique globally attracting point within $U$.
\end{cor}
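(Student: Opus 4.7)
The plan is to simply chain together the two major results already established in the paper: Proposition~\ref{prop:robustpermanence} and Theorem~\ref{thm:main-k}. The corollary sits precisely at their interface, so the argument should be short.

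First, I would invoke Proposition~\ref{prop:robustpermanence}. Its hypothesis is exactly the Permanence Conjecture for variable-$\vv\kk$ mass-action systems, which we are assuming. Applied to the complex-balanced system $(G,\vv\kk^*)$, it produces, on each compatibility class, a compact attracting set $K$ together with a radius $\delta > 0$ such that $K$ is forward-absorbing for $\dot{\xx} = \vv f(\xx;\vv\kk)$ for every $\vv\kk \in B(\vv\kk^*,\delta)$. This is precisely the statement that $(G,\vv\kk^*)$ is robustly permanent with respect to $\vv\kk^*$ in the sense of Definition~\ref{def:robustperm}.

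Second, with robust permanence in hand, Theorem~\ref{thm:main-k} applies directly: on each compatibility class $U$ there exists $\eps > 0$ (possibly smaller than the $\delta$ from the previous step) such that for every $\vv\kk \in B(\vv\kk^*,\eps)$, the system $(G,\vv\kk)$ has a unique globally attracting point in $U$. This is exactly the conclusion of the corollary, so the proof ends here.

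There is essentially no obstacle; the only thing to be careful about is verifying that the hypothesis of Proposition~\ref{prop:robustpermanence} (the variable-$\vv\kk$ conjecture) is indeed what the corollary assumes, and that the $\eps$ produced at the end may be smaller than the $\delta$ coming from robust permanence — but since Theorem~\ref{thm:main-k} quantifies over $\eps$ rather than fixes it, this is automatic. The corollary is therefore a one-line composition, and the proof should be presented as such.
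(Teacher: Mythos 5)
Your proposal is correct and matches the paper's own (implicit) argument: the corollary is stated as an immediate consequence of combining \Cref{prop:robustpermanence} (the variable-$\vv\kk$ Permanence Conjecture gives robust permanence of $(G,\vv\kk^*)$) with \Cref{thm:main-k}. Nothing further is needed.
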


\begin{cor}
\label{cor:specialcases}
    Let $G$ be weakly reversible reaction network that either has one connected component or is strongly endotactic or is in $\rr^2$. 
    Let $(G,\vv\kk^*)$ be complex-balanced. Then on every compatibility class $U$, there exists $\eps > 0$ such that for every $\vv \kk \in B(\vv\kk^*,\eps)$, the mass-action system $(G,\vv\kk)$ has a unique globally attracting point within $U$.
\end{cor}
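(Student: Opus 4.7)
The plan is to apply the Smith--Waltman perturbation theorem (\Cref{thm:SmithWaltman}) to the lower-dimensional system produced by Johnston's reduction (\Cref{thm:JohnstonLinearizedStability}). A direct application to $\dot\xx = \vv f(\xx;\vv\kk)$ itself is not possible, because the conservation laws force $\grad_{\xx}\vv f(\xx^*;\vv\kk^*)$ to have zero eigenvalues along directions transverse to the compatibility class, so the spectral hypothesis (ii) cannot hold on $\rrpp^n$. Passing to a single compatibility class sidesteps this obstruction, and Johnston's theorem is precisely the tool that makes the reduction global and smooth.

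Concretely, fix a compatibility class $\tilde U = (\vv x_0 + S)_>$ and let $\xx^*\in\tilde U$ be the complex-balanced steady state there. By \Cref{thm:JohnstonLinearizedStability} there is a diffeomorphism $\Phi\colon \tilde U \to U \subseteq \rr^s$ that conjugates $\dot\xx = \vv f(\xx;\vv\kk)|_{\tilde U}$ to a system $\dot{\vv z} = \vv F(\vv z;\vv\kk,\vv x_0)$ on $U$, for each $\vv\kk$. Crucially, $\Phi$ depends on $\tilde U$ but not on $\vv\kk$, and the right-hand side of \eqref{eq:mas} is polynomial in $\xx$ and linear in $\vv\kk$, so the Jacobian $\grad_{\vv z}\vv F$ is continuous on $U \times \Lambda$ where $\Lambda = B(\vv\kk^*,\eps)$. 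This verifies the regularity hypotheses of \Cref{thm:SmithWaltman}.

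Next I check the three pointwise hypotheses at $(\vv z^*, \vv\kk^*)$ with $\vv z^* = \Phi(\xx^*)$: (i) $\vv F(\vv z^*;\vv\kk^*) = \vv 0$ holds since $\xx^*$ is a steady state and $\Phi$ is a diffeomorphism; (ii) the spectrum of $\grad_{\vv z}\vv F(\vv z^*;\vv\kk^*)$ lies in the open left half-plane by the linearized stability statement of \Cref{thm:JohnstonLinearizedStability}; (iii) the steady state $\vv z^*$ is globally attracting for the unperturbed reduced system, because robust permanence at $\vv\kk^*$ in particular gives permanence, and a permanent complex-balanced system is globally stable on each compatibility class (the Horn--Jackson Lyapunov function strictly decreases along interior trajectories and permanence prevents the $\omega$-limit set from touching the boundary, as already invoked in the proof of \Cref{prop:robustpermanence}). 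Finally, robust permanence with respect to $\vv\kk^*$ furnishes a compact $\tilde K \subseteq \tilde U$ that is eventually absorbing uniformly for all $\vv\kk \in B(\vv\kk^*,\eps)$, and $K = \Phi(\tilde K) \subseteq U$ supplies the uniform attracting set in the last hypothesis of \Cref{thm:SmithWaltman}.

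Applying \Cref{thm:SmithWaltman} yields a radius $\eps' \leq \eps$ and, for every $\vv\kk \in B(\vv\kk^*,\eps')$, a unique $\hat{\vv z}(\vv\kk) \in U$ with $\vv F(\hat{\vv z}(\vv\kk);\vv\kk) = \vv 0$ that globally attracts every trajectory of the reduced system on $U$. Pushing this conclusion back through $\Phi^{-1}$ produces the unique globally attracting point of $(G,\vv\kk)$ within $\tilde U$ claimed by the theorem. The main obstacle I expect is bookkeeping around hypothesis (iii): the unperturbed complex-balanced system must be shown to be globally attracting on $\tilde U$, and although this is folklore once permanence is in hand, it is the one spot where a non-trivial fact (beyond linear algebra and Smith--Waltman) enters, so I would write out the Lyapunov-plus-permanence argument explicitly rather than quoting it.
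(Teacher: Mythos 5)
There is a genuine gap: your argument never uses the actual hypotheses of the corollary. What you have written is essentially a re-derivation of \Cref{thm:main-k} (Johnston's reduction plus Smith--Waltman), in which ``robust permanence with respect to $\vv\kk^*$'' is taken as given --- both for hypothesis (iii) and for the uniform absorbing set $K$. But the corollary does not assume robust permanence; its hypotheses are that $G$ is weakly reversible and has one connected component, or is strongly endotactic, or lives in $\rr^2$. The entire content of the corollary beyond \Cref{thm:main-k} (which you could simply have cited) is the bridge from these network-level hypotheses to robust permanence, and that bridge is missing from your proposal.

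The bridge is supplied by the literature together with \Cref{prop:robustpermanence}: for precisely these three classes, the Permanence Conjecture for variable-$\vv\kk$ systems is a theorem (one connected component, strongly endotactic, and planar systems, as cited in \Cref{sec:permanence}). Running the argument of \Cref{prop:robustpermanence} --- choose $\eps$ with $\eps < \kk^*_{ij} < \eps^{-1}$, take $\delta$ so that $B(\vv\kk^*,\delta) \subseteq [\eps,\eps^{-1}]^r$, and specialize the variable-$\vv\kk$ result to constant coefficient functions --- yields a \emph{single} compact attracting set per compatibility class valid for every $\vv\kk \in B(\vv\kk^*,\delta)$, i.e., robust permanence with respect to $\vv\kk^*$. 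This uniformity is essential and is exactly why fixed-$\vv\kk$ permanence of $(G,\vv\kk^*)$ alone (or even permanence of each nearby $(G,\vv\kk)$ separately, with $\vv\kk$-dependent compact sets) would not suffice for the last hypothesis of \Cref{thm:SmithWaltman}. Once robust permanence is established this way, the conclusion follows directly from \Cref{thm:main-k}; your Johnston/Smith--Waltman machinery is then redundant rather than wrong.
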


\begin{figure}[h!]
\centering
\begin{subfigure}[b]{0.38\textwidth}
\centering 
    \begin{tikzpicture}[xscale=1.75]
    \draw [step=1, gray, very thin] (0,-0.25) grid (3.5,0.25);
        \draw [ ->, gray] (0,0)--(3.5,0);
        \draw [  gray] (0,-0.35)--(0,0.35);

    \node [blue,outer sep=0pt, inner sep=1pt] (0) at (0,0) {$\bullet$}; 
    \node [blue,outer sep=0pt, inner sep=1pt] (1) at (1,0) {$\bullet$}; 
    \node [blue,outer sep=0pt, inner sep=1pt] (2) at (2,0) {$\bullet$}; 
    \node [blue,outer sep=0pt, inner sep=1pt] (3) at (3,0) {$\bullet$}; 
    
    \node [blue] at (0) [below=4pt] {$\sf{0}$};
    \node [blue] at (1) [below=4pt] {$\sf{X}$};
    \node [blue] at (2) [below=4pt] {$\sf{2X}$};
    \node [blue] at (3) [below=4pt] {$\sf{3X}$};
    
    \draw[blue, fwdrxn, transform canvas={yshift=1.5pt}] (0)--(1) node [midway, above] {\ratecnst{$\kk_1$}};
    \draw[blue, fwdrxn, transform canvas={yshift=-1.5pt}] (1)--(0) node [midway, below] {\ratecnst{$\kk_2$}};
    
    \draw[blue, fwdrxn, transform canvas={yshift=1.5pt}] (2)--(3) node [midway, above] {\ratecnst{$\kk_3$}};
    \draw[blue, fwdrxn, transform canvas={yshift=-1.5pt}] (3)--(2) node [midway, below] {\ratecnst{$\kk_4$}};
    
\end{tikzpicture}
\vspace{1.75cm} 
    \caption{}
    \label{fig:ms-network}
\end{subfigure} 
\hspace{0.75cm}
\begin{subfigure}[b]{0.5\textwidth}
\centering 
    \includegraphics[width=3.15in]{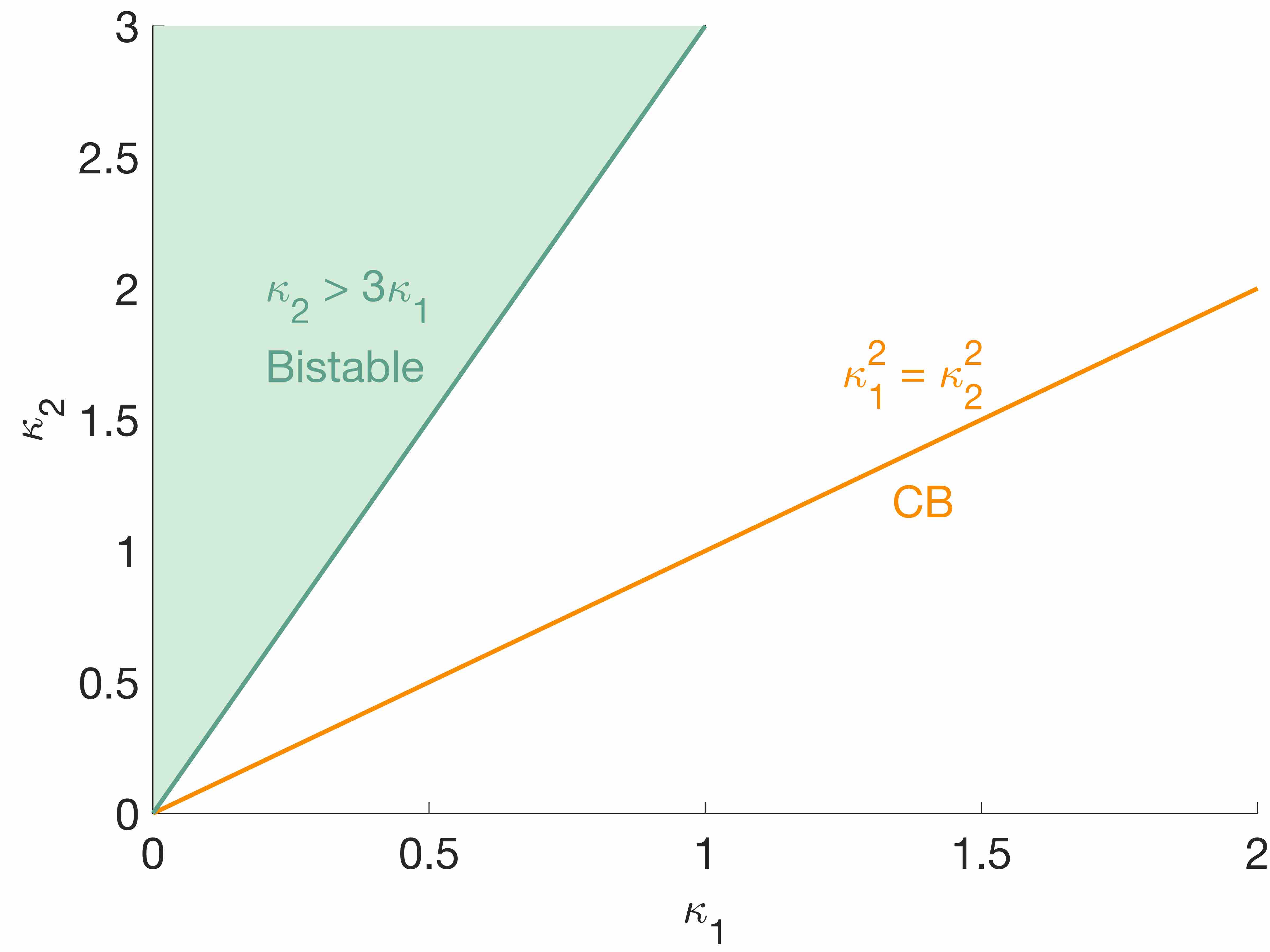}
    \caption{}
    \label{fig:ms-bifurcation}
\end{subfigure} 
\caption{(a) A network with the capacity for multiple steady states. (b) Bifurcation diagram when $\kk_1 = \kk_4$ and $\kk_2 =\kk_3$. The mass-action system is complex-balanced when $\kk_1 =\kk_2$ (orange line); it is bistable when $\kk_2 > 3\kk_1$ (green region). As $\min \kk_i \to 0$, the $\eps$-ball of \Cref{thm:main-k} necessarily gets smaller. }
\label{fig:ms}
\end{figure}

\subsection{Examples}
\label{sec:examples}

\begin{ex}
\label{ex:ms} 
    Consider the network in \Cref{fig:ms-network}, which has the capacity for multiple positive steady states. This example illustrates that \Cref{thm:main-k} applies even when the system can exhibit interesting dynamics, and that $\eps$ from the theorem depends on the choice of $\vv\kk^*$. 
    
    For clarity of discussion, we assume that $\kk_1 = \kk_4$ and $\kk_2 = \kk_3$ with no great loss of generality. There is a steady state at $x^* =1 $, whose eigenvalue is $\kk_2  - 3 \kk_1$. Clearly for $\kk_2 < 3\kk_1$, the steady state is asymptotically stable, hence globally stable. It is complex-balanced if and only if $\kk_1 = \kk_2$. It is not difficult to show that the remaining steady states are given by 
    \eq{ 
        x^* = \frac{\kk_2 -\kk_1  \pm \sqrt{(\kk_2-\kk_1)^2 -4\kk_1^2} }{2\kk_1}, 
    }
    which are real (and positive) if and only if $\kk_2 \geq 3\kk_1$. Indeed, the dynamics of this mass-action systems can be summarized by  \Cref{fig:ms-bifurcation}: global stability when $\kk_2 < 3\kk_1$; bistability when $\kk_2 > 3\kk_1$, with the transition across $\kk_2 = 3\kk_1$ being a pitchfork bifurcation. 
    
    Furthermore, we can see from \Cref{fig:ms-bifurcation} that the upper bound on $\eps$ from \Cref{thm:main-k} depends on the initial choice of $\vv\kk^*$ that makes the system complex-balanced. In this case, $\vv\kk^*$ satisfies $\kk_1^2 = \kk_2^2$; more generally, $\kk_1\kk_4 = \kk_2\kk_3$. In particular, $\eps$ necessarily has to be small when $\kk_i$ is small. 
\end{ex}

\begin{figure}[h!]
\centering
\begin{subfigure}[b]{0.25\textwidth}
\centering 
    \begin{tikzpicture}[scale=1.75]
    \draw [step=1, gray, very thin] (0,0) grid (1.5,1.5);
        \draw [ ->, gray] (0,0)--(1.5,0);
        \draw [  gray] (0,0)--(0,1.5);

    \node [blue,outer sep=0pt, inner sep=1pt] (0) at (0,0) {$\bullet$}; 
    \node [blue,outer sep=0pt, inner sep=1pt] (1) at (1,0) {$\bullet$}; 
    \node [blue,outer sep=0pt, inner sep=1pt] (2) at (1,1) {$\bullet$}; 
    \node [blue,outer sep=0pt, inner sep=1pt] (3) at (0,1) {$\bullet$}; 
    
    \node [blue] at (0) [below=4pt] {$\sf{0}$};
    \node [blue] at (1) [below=4pt] {$\sf{X}$};
    \node [blue] at (2) [above=4pt] {$\sf{X+Y}$};
    \node [blue] at (3) [above=4pt] {$\sf{Y}$};
    
    \draw[blue, fwdrxn] (0)--(1) node [midway, below] {\ratecnst{$a_1$}};
    \draw[blue, fwdrxn] (1)--(2) node [midway, right] {\ratecnst{$\kk_2$}};
    \draw[blue, fwdrxn] (2)--(3) node [midway, above] {\ratecnst{$\kk_3$}};
    \draw[blue, fwdrxn] (3)--(0) node [midway, left] {\ratecnst{$\kk_4$}};
    \draw[blue, fwdrxn] (0)--(2) node [midway, above] {\ratecnst{$a_5$\,}};
    
\end{tikzpicture}
    \caption{} 
    \label{fig:square-diag-Ex-network} 
    
    \begin{tikzpicture}[scale=1.75]
    \draw [step=1, gray, very thin] (0,0) grid (1.5,1.5);
        \draw [ ->, gray] (0,0)--(1.5,0);
        \draw [  gray] (0,0)--(0,1.5);

    \node [blue,outer sep=0pt, inner sep=1pt] (0) at (0,0) {$\bullet$}; 
    \node [blue,outer sep=0pt, inner sep=1pt] (1) at (1,0) {$\bullet$}; 
    \node [blue,outer sep=0pt, inner sep=1pt] (2) at (1,1) {$\bullet$}; 
    \node [blue,outer sep=0pt, inner sep=1pt] (3) at (0,1) {$\bullet$}; 
    
    \node [blue] at (0) [below=4pt] {$\sf{0}$};
    \node [blue] at (1) [below=4pt] {$\sf{X}$};
    \node [blue] at (2) [above=4pt] {$\sf{X+Y}$};
    \node [blue] at (3) [above=4pt] {$\sf{Y}$};
    
    \draw[blue, fwdrxn] (0)--(1) node [midway, below] {\ratecnst{$\kk_1$}};
    \draw[blue, fwdrxn] (1)--(2) node [midway, right] {\ratecnst{$\kk_2$}};
    \draw[blue, fwdrxn] (2)--(3) node [midway, above] {\ratecnst{$\kk_3$}};
    \draw[blue, fwdrxn, transform canvas={xshift=-1.5pt}] (3)--(0) node [midway, left] {\ratecnst{$\kk_4$}};
    \draw[blue, fwdrxn, transform canvas={xshift=1.5pt}] (0)--(3) node [midway, right] {\ratecnst{\!$\kk_6$}};
    \draw[blue, fwdrxn] (0)--(2) node [midway, above] {\ratecnst{$\kk_5$\,}};
    
\end{tikzpicture}
    \caption{} 
    \label{fig:square-diag-Ex-DE} 
\end{subfigure} 
\hspace{0.5cm}
\begin{subfigure}[b]{0.65\textwidth}
\centering 
    \hspace{5cm}
    \includegraphics[width=4in]{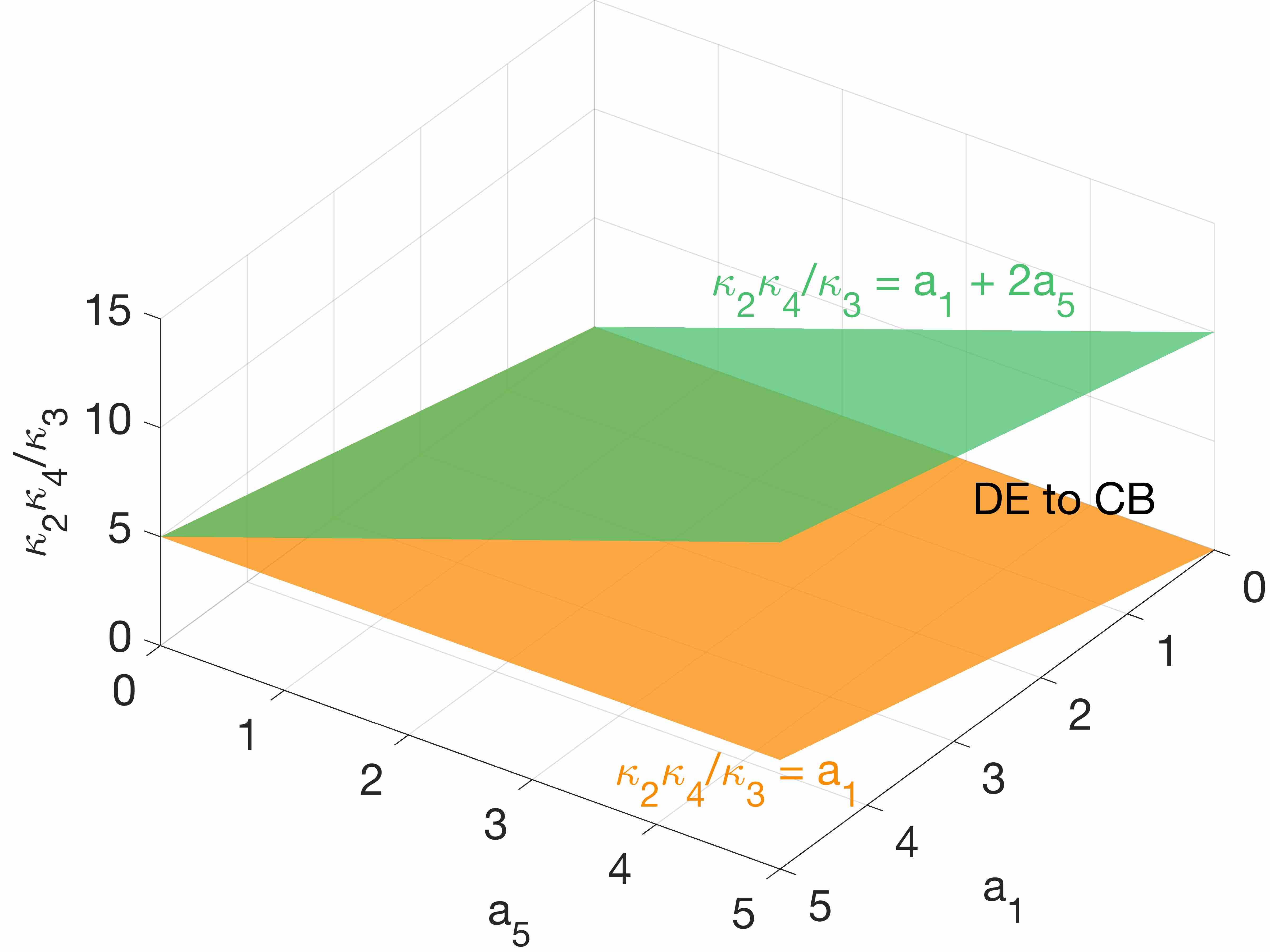}
    \caption{}
    \label{fig:square-diag-Ex-param}
\end{subfigure}
\caption{(a) A deficiency one mass-action system that is complex-balanced if and only if $\frac{\kk_2\kk_4}{\kk_3} = a_1$. It has the same dynamics as the one in (b) if $\kk_1 + \kk_5 = a_1 + a_5$ and $\kk_5 + \kk_6 = a_5$. (c) The lower surface is the toric locus of the system in (a). Dynamical equivalence to complex-balancing allows us to conclude global stability for all rate constants between the two surfaces or in a neighbourhood of the surfaces. }
\label{fig:square-diag-Ex}
\end{figure}

\begin{ex}
\label{ex:square-diag}
    The following example not only illustrates how our result applies directly, but also how we can vastly expand the parameter region for global stability by considering networks that are \emph{dynamically equivalent to complex-balancing}~\cite{CraciunJinYu2019, HaqueSatrianoSoreaYu2022, BCS22}. 
    Consider the weakly reversible network $G$ in \Cref{fig:square-diag-Ex-network}, with arbitrary rate constants $a_1$, $a_5$, $\kk_2$, $\kk_3$, $\kk_4 > 0$. The associated dynamics
    \begin{equation}\label{eq:ex-original} 
    \begin{aligned} 
        \dot{x} &= a_1 + a_5 - \kk_3 xy \\
        \dot{y} &= a_5 +\kk_2 x - \kk_4 y
    \end{aligned}
    \end{equation}
    has exactly one positive steady state,  
    which is complex-balanced when $K_1K_4 = K_2K_3$, where 
    \eq{ 
        K_1 = \kk_2\kk_3\kk_4 , \qquad & K_3 = \kk_2\kk_4(a_1+a_5) \\ 
        K_2 = a_1\kk_3\kk_4 , \qquad & K_4 = \kk_2\kk_3(a_1+a_5) ,
    }
    or after simplification,  $\kk_2\kk_4/\kk_3 = a_1 $ (lower orange surface in \Cref{fig:square-diag-Ex-param}). Clearly this does not hold for arbitrarily chosen rate constant. However, by \Cref{cor:specialcases}, we conclude that the system is globally stable for rate constants that are sufficiently close to satisfying $\kk_2\kk_4/\kk_3 = a_1 $. 

    We can expand the region for global stability using the notion of dynamical equivalence. The system \eqref{eq:ex-original} can be written as 
    \begin{equation}\label{eq:ex-DE} 
    \begin{aligned} 
        \dot{x} &= \kk_1 + \kk_5 - \kk_3 xy \\
        \dot{y} &= \kk_5 + \kk_6 +\kk_2 x - \kk_4 y, 
    \end{aligned}
    \end{equation}
    where $\kk_1 + \kk_5 = a_1 + a_5$ and $\kk_5 + \kk_6 = a_5$. The system \eqref{eq:ex-DE} is the associated system of the network in \Cref{fig:square-diag-Ex-DE}, and this system is complex-balanced if and only if 
    \eqn{\label{eq:ex-MMT} 
        \kk_2\kk_4(\kk_1+\kk_5) = \kk_1\kk_3(\kk_1+\kk_5+\kk_6).
    }
    It is not difficult to show that for any $a_1$, $a_5$, $\kk_2$, $\kk_3$, $\kk_4 > 0$ such that $a_1 <  \frac{\kk_2\kk_4}{\kk_3} < a_1 + 2a_5$, we can choose positive parameters 
    \eq{ 
        \kk_1 &= \frac{-a_5 + \sqrt{a_5^2 + 4(a_1+a_5) \frac{\kk_2\kk_4}{\kk_3} } }{2}, \\
        \kk_5 &= \frac{2a_1 + 3a_5 - \sqrt{a_5^2 + 4(a_1+a_5) \frac{\kk_2\kk_4}{\kk_3} }}{2} , \\
        \kk_6 & = \frac{-2a_1-a_5 + \sqrt{a_5^2 + 4(a_1+a_5) \frac{\kk_2\kk_4}{\kk_3} } }{2}, 
    }
    so that $\kk_1 + \kk_5 = a_1 + a_5$ and $\kk_5 + \kk_6 = a_5$,  and \eqref{eq:ex-MMT} hold. The former means that the systems in \Cref{fig:square-diag-Ex-network,fig:square-diag-Ex-DE} share the same ODEs, while \eqref{eq:ex-MMT} implies the system in \Cref{fig:square-diag-Ex-DE} is complex-balanced. Applying \Cref{cor:specialcases} to the system in \Cref{fig:square-diag-Ex-DE}, we can conclude global stability on a small neighbourhood of 
    \eq{ 
        a_1 \leq  \frac{\kk_2\kk_4}{\kk_3} \leq a_1 + 2a_5,
    }
    the region bounded by the two surfaces in \Cref{fig:square-diag-Ex-param}. 
    Of course, since \eqref{eq:ex-original} and \eqref{eq:ex-DE} are identical, global stability holds for the system in \Cref{fig:square-diag-Ex-network} as well. In short, by allowing for dynamical equivalence, it is possible to extend the known region for global stability, in this case from near the lower surface in \Cref{fig:square-diag-Ex-param} to a neighbourhood around the region bounded by the two surfaces in the figure. 
\end{ex}

\printbibliography

\end{document}